 \numberwithin{equation}{section}
 \theoremstyle{plain}
\newtheorem{theorem}{Theorem}[section]
\newtheorem{lemma}[theorem]{Lemma}
\newtheorem{proposition}[theorem]{Proposition}
\newcommand{\dc}{d_{\rm c}}
\newcommand{\dpst}{\displaystyle}
\newcommand{\lbeq}[1]{\label{eq:#1}}
\newcommand{\mC}{{\mathbb C}}
\newcommand{\mP}{{\mathbb P}}
\newcommand{\mR}{{\mathbb R}}
\newcommand{\mZ}{{\mathbb Z}}
\newcommand{\nn}{\nonumber}
\newcommand{\ovec}{\boldsymbol{o}}
\newcommand{\pc}{p_{\rm c}}
\newcommand{\Rd}{\mR^d}
\newcommand{\refeq}[1]{(\ref{eq:#1})}
\newcommand{\sss}{\scriptscriptstyle}
\newcommand{\xvec}{\boldsymbol{x}}
\newcommand{\Zd}{\mZ^d}
\newcommand{\Zp}{\mZ_+}
\newcommand{\wvec}  {\boldsymbol{w}}
\newcommand{\yvec}  {\boldsymbol{y}}
\newcommand{\zvec}  {\boldsymbol{z}}
\title{Critical behavior and the limit distribution\\
for long-range oriented percolation.~II:\\
Spatial correlation}
\author{
Lung-Chi~Chen\footnote{Department of Mathematics, Fu-Jen Catholic
University, Taiwan.  {\tt lcchen@math.fju.edu.tw}}\\
Akira~Sakai\footnote{ Creative Research Initiative ``Sousei", Hokkaido
University, Japan.  {\tt sakai@cris.hokudai.ac.jp}}
\date{April 27, 2008\footnote{Updated: August 11, 2008.  The original
publication is available at www.springerlink.com.}}
}
\begin{document}
\maketitle

\begin{abstract}
We prove that the Fourier transform of the properly-scaled normalized two-point
function for sufficiently spread-out long-range oriented percolation with index
$\alpha>0$ converges to $e^{-C|k|^{\alpha\wedge2}}$ for some $C\in(0,\infty)$
above the upper-critical dimension $\dc\equiv2(\alpha\wedge2)$.  This answers
the open question remained in the previous paper \cite{cs08}.  Moreover, we
show that the constant $C$ exhibits crossover at $\alpha=2$, which is a result
of interactions among occupied paths.  The proof is based on a new method of
estimating fractional moments for the spatial variable of the lace-expansion
coefficients.
\end{abstract}

\section{Introduction and the main result}
We consider oriented bond percolation on $\Zd\times\Zp$, where each
time-oriented bond $((x,n),(y,n+1))$ is occupied with probability $pD(y-x)$ and
vacant with probability $1-pD(y-x)$, independently of the other bonds.  Here,
$D$ is a $\Zd$-symmetric probability distribution on $\Zd$, hence the parameter
$p\in[0,\|D\|_\infty^{-1}]$ can be interpreted as the average number of
occupied bonds per vertex. We say that a vertex $(x,j)$ is connected to
$(y,n)$, and write $(x,j)\to(y,n)$, if either $(x,j)=(y,n)$ or there is a
time-oriented path of occupied bonds from $(x,j)$ to $(y,n)$. Let $\mP_p$ be
the probability distribution of the bond variables, and define the two-point
function as
\begin{align*}
\varphi_p(x,n)=\mP_p\big((o,0)\to(x,n)\big),
\end{align*}
and its Fourier transform as
\begin{align*}
Z_p(k;n)=\sum_{x\in\Zd}e^{ik\cdot x}\varphi_p(x,n)\qquad(k\in[-\pi,\pi]^d).
\end{align*}
Notice that $Z_p(0;n)\equiv\sum_{x\in\Zd}\varphi_p(x,n)$ is the expected number
of vertices at time $n$ connected from $(o,0)$.  It has been known (\cite{gh02}
and references therein) that there is a $\pc\ge1$ such that
\begin{align*}
\chi_p\equiv\sum_{n=0}^\infty Z_p(0;n)
 \begin{cases}
 <\infty&(p<\pc),\\
 =\infty\quad&(p\ge\pc).
 \end{cases}
\end{align*}

In the previous paper \cite{cs08} (often referred to as Part~I from
now on), we investigated critical behavior of long-range oriented
percolation defined by
\begin{align*}
D(x)=\frac{h(x/L)}{\sum_{y\in\Zd}h(y/L)},
\end{align*}
where $h$ is a probability density function on $\Rd$ satisfying
$h(x)\asymp|x|^{-d-\alpha}$ (i.e., $|x|^{d+\alpha}h(x)$ is bounded
away from zero and infinity) for large $x$.  Here, $\alpha>0$ is the
characteristic index, and $L\in[1,\infty)$ is the parameter that
serves the model to spread out.  For example,
$\|D\|_\infty=O(\lambda)$, where
\begin{align*}
\lambda=L^{-d}.
\end{align*}
See \cite[Section~1.1]{cs08} for the precise definition and other
properties of $D$.  Notice that the variance
$\sigma^2\equiv\sum_{x\in\Zd}|x|^2D(x)$ does not exist if
$\alpha\le2$.

Suppose that there is a positive finite constant $v_\alpha$
($=\frac{\sigma^2}{2d}$ if $\alpha>2$) such that the Fourier transform $\hat
D(k)\equiv\sum_{x\in\Zd}D(x)e^{ik\cdot x}$ obeys the asymptotics
\begin{align}\lbeq{valpha}
1-\hat D(k)\underset{|k|\to0}\sim
 \begin{cases}
 v_\alpha|k|^{\alpha\wedge2}&(\alpha\ne2),\\
 v_2|k|^2\log\frac1{|k|}\quad&(\alpha=2).
 \end{cases}
\end{align}
The assumption \refeq{valpha} with $v_\alpha=O(L^{\alpha\wedge2})$ indeed holds
if, e.g., $h(x)\sim c|x|^{-d-\alpha}$ as $|x|\to\infty$ for some constant $c$
(see \cite[Section~10.5]{ks07} for the 1-dimensional case).  Let
\begin{align}\lbeq{kn-def}
k_n=k\times
 \begin{cases}
 (v_\alpha n)^{-\frac1{\alpha\wedge2}}&(\alpha\ne2),\\
 (v_2n\log\sqrt{n})^{-\frac12}\quad&(\alpha=2),
 \end{cases}
\end{align}
so that
\begin{align}\lbeq{n(1-D)}
\lim_{n\to\infty}n\big(1-\hat D(k_n)\big)=|k|^{\alpha\wedge2}.
\end{align}
Among various results, we proved that, for $\alpha>0$,
$d>2(\alpha\wedge2)$, $L\gg1$, $p\in(0,\pc]$ and $k\in\Rd$, there
exists $c,c'=1+O(\lambda)$ such that the normalized two-point
function satisfies
\begin{align}\lbeq{Zlim}
e^{-c|k|^{\alpha\wedge2}}\le\liminf_{n\to\infty}\frac{Z_p(k_n;n)}{Z_p(0;n)}\le
 \limsup_{n\to\infty}\frac{Z_p(k_n;n)}{Z_p(0;n)}\le e^{-c'|k|^{\alpha\wedge2}}.
\end{align}
Here, $\dc\equiv2(\alpha\wedge2)$ is the upper-critical dimension of this
model. We do not expect that \refeq{Zlim} holds for $d<\dc$.  Compare this
result with the behavior of the two-point function for the branching random
walk on $\Zd$ whose mean number of offspring per parent is $p>0$:
\begin{align}\lbeq{BRW}
Z_p^{\sss\text{BRW}}(k;n)=p^n\hat D(k)^n,&&
\lim_{n\to\infty}\frac{Z_p^{\sss\text{BRW}}(k_n;n)}{Z_p^{\sss\text{BRW}}(0;n)}
 =e^{-|k|^{\alpha\wedge2}}.
\end{align}
The latter is an immediate consequence of the former and \refeq{n(1-D)}.  We
note that $e^{-|k|^\alpha}$ is the characteristic function of an
$\alpha$-stable random variable (see, e.g., \cite{st94}).

The proof in \cite{cs08} of \refeq{Zlim} is based on the lace expansion for the
two-point function.  To derive information of the sequence $Z_p(k;n)$ from its
sum (= the Fourier-Laplace transform of the two-point function) and prove
\refeq{Zlim}, we established optimal control over fractional moments for the
\emph{time} variable of the lace-expansion coefficients. However, due to the
long-range nature of our $D$, we were unable to optimally control fractional
moments for the \emph{spatial} variable of the expansion coefficients and
squeeze the bounds in \refeq{Zlim} to identify the limit.  We note that, by the
standard Taylor-expansion method, the limit has been shown to exist at $p=\pc$
if $\alpha>2$ \cite{hs02} and for every $p\in(0,\pc]$ if the model is
finite-range \cite{NY2}.  This standard method does not work for $\alpha<2$ in
the current setting.

In this paper, we develop a new method to estimate fractional moments for the
spatial variable of the expansion coefficients and achieve the following result
on the normalized two-point function:

\begin{theorem}\label{thm:main}
Let $\alpha>0$, $d>2(\alpha\wedge2)$, $L\gg1$ and $p\in(0,\pc]$.  There is a
$C=1+O(\lambda)$ such that, for any $k\in\Rd$,
\begin{align*}
\lim_{n\to\infty}\frac{Z_p(k_n;n)}{Z_p(0;n)}=e^{-C|k|^{\alpha\wedge2}},
\end{align*}
where $k_n$ is defined in \refeq{kn-def}.  Moreover,
\begin{align}\lbeq{Cformula}
C=\frac1{\dpst1+pm_p\sum_{(x,n)}n\,\pi_p(x,n)\,m_p^n}\times
 \begin{cases}
 \dpst1+\frac{pm_p}{\sigma^2}\sum_{(x,n)}|x|^2\pi_p(x,n)\,m_p^n\quad
  &(\alpha>2),\\
 1&(\alpha\le2),
 \end{cases}
\end{align}
where $m_p$ is the radius of convergence for $\sum_{n=0}^\infty Z_p(0;n)\,m^n$,
and $\pi_p(x,n)$ is the alternating sum of the lace-expansion coefficients.
The sums in \refeq{Cformula} are absolutely convergent.
\end{theorem}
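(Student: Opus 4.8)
The plan is to study the Fourier--Laplace transform $\hat\varphi_p(k;z):=\sum_{n\ge0}Z_p(k;n)\,z^n$ through the lace expansion and then pass back to the sequence $Z_p(k;n)$. With $\hat\pi_p(k;z):=\sum_{(x,n)}e^{ik\cdot x}\pi_p(x,n)\,z^n$ (so $\pi_p(x,0)=\delta_{x,o}$ and $\hat\pi_p(k;0)=1$), the lace expansion gives
\begin{align*}
\hat\varphi_p(k;z)=\frac{\hat\pi_p(k;z)}{1-zp\hat D(k)\,\hat\pi_p(k;z)}\,;
\end{align*}
in particular $m_p$ is the smallest positive zero of $z\mapsto1-zp\,\hat\pi_p(0;z)$, i.e.\ $m_p\,p\,\hat\pi_p(0;m_p)=1$, and for $k$ near the origin the denominator has a simple zero $z_k$ with $z_0=m_p$ and $z_k\to m_p$.

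The first step is to transfer this back to the sequence. By the singularity-analysis/Tauberian argument of Part~I, fed by the optimal time-variable moment bounds established there, the dominant singularity of $z\mapsto\hat\varphi_p(k;z)$ is the simple pole at $z_k$, the subdominant part is negligible, and one gets
\begin{align*}
Z_p(k;n)=R_k\,z_k^{-n}\big(1+o(1)\big)\qquad(n\to\infty),
\end{align*}
uniformly for $k$ in a neighbourhood of the origin, where $R_k=\hat\pi_p(k;z_k)^2/\big(\hat\pi_p(k;z_k)+z_k\,\partial_z\hat\pi_p(k;z_k)\big)$ is positive with $R_k\to R_0$. Since $z_{k_n}\to m_p$ and $R_{k_n}\to R_0$, the ratio collapses to
\begin{align*}
\frac{Z_p(k_n;n)}{Z_p(0;n)}=\exp\!\Big(-n\log\frac{z_{k_n}}{m_p}+o(1)\Big),
\end{align*}
so everything reduces to the asymptotics of $n\,(z_{k_n}/m_p-1)$ as $n\to\infty$.

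For that I would apply the implicit function theorem to $z_k\,p\,\hat D(k)\,\hat\pi_p(k;z_k)=1$ around $(0,m_p)$. Dividing by $m_p\,p\,\hat\pi_p(0;m_p)=1$, writing $g(k)=z_k/m_p-1$, and using $\hat\pi_p(0;m_p)=(pm_p)^{-1}$ together with the $\Zd$-symmetry of $\pi_p$ (which leaves only $1-\cos(k\cdot x)$ in the relevant difference), one finds
\begin{align*}
g(k)=\frac{\big(1-\hat D(k)\big)+pm_p\sum_{(x,n)}\big(1-\cos(k\cdot x)\big)\,\pi_p(x,n)\,m_p^n}
 {1+pm_p\sum_{(x,n)}n\,\pi_p(x,n)\,m_p^n}+o\big(1-\hat D(k)\big)\qquad(k\to0).
\end{align*}
Setting $k=k_n$, multiplying by $n$ and letting $n\to\infty$: by \refeq{n(1-D)} the first term of the numerator contributes $|k|^{\alpha\wedge2}$ (divided by the denominator), and the second term is precisely the origin of the crossover at $\alpha=2$. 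For $\alpha>2$ one has $\sum_{(x,n)}|x|^2|\pi_p(x,n)|\,m_p^n<\infty$, hence $\sum_{(x,n)}(1-\cos(k\cdot x))\pi_p(x,n)\,m_p^n\sim\frac{|k|^2}{2d}\sum_{(x,n)}|x|^2\pi_p(x,n)\,m_p^n$, of the same order $|k|^2$ as $1-\hat D(k)\sim\frac{\sigma^2}{2d}|k|^2$; carrying both contributions through the ratio gives the factor $1+\frac{pm_p}{\sigma^2}\sum_{(x,n)}|x|^2\pi_p(x,n)\,m_p^n$ of \refeq{Cformula}. For $\alpha\le2$ one shows instead $\sum_{(x,n)}|x|^{\alpha\wedge2}|\pi_p(x,n)|\,m_p^n<\infty$, so dominated convergence yields $\sum_{(x,n)}(1-\cos(k\cdot x))\pi_p(x,n)\,m_p^n=o\big(1-\hat D(k)\big)$ as $k\to0$ (recall the extra logarithm in $1-\hat D(k)$ when $\alpha=2$); the second term then drops out and the bracket of \refeq{Cformula} equals $1$. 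Assembling the limits gives $Z_p(k_n;n)/Z_p(0;n)\to e^{-C|k|^{\alpha\wedge2}}$ with the stated $C$, and $C=1+O(\lambda)$ because each $\pi_p$-sum is $O(\lambda)$.

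The main obstacle, and the genuine novelty of the paper, is the remaining input: the spatial-moment bounds $\sum_{(x,n)}|x|^{\alpha\wedge2}|\pi_p(x,n)|\,m_p^n<\infty$. Since $\pi_p$ is an alternating sum of lace diagrams built by convolving the long-range kernel $D$, whose own spatial moments of order $\ge\alpha$ diverge, the crude diagrammatic estimates are not summable in $x$, and the Taylor-expansion approach that handles $\alpha>2$ in \cite{hs02} and the finite-range case in \cite{NY2} breaks down for $\alpha<2$. The plan is to develop a new estimate for the fractional moments $\sum_x|x|^s|\pi_p(x,n)|$ of the expansion coefficients, exploiting that every lace diagram carries at least a double connection and hence decays in the spatial variable strictly faster than $D$ itself; combined with the time-variable control of Part~I this yields both the absolute convergence asserted in \refeq{Cformula} and, via dominated convergence, the vanishing of the $\pi_p$-contribution when $\alpha\le2$. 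This occupies the bulk of the paper; granted it, the transfer argument above completes the proof.
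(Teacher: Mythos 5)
Your first half --- reducing the theorem to the limit \refeq{pi/1-D} via Part~I's transfer of the Fourier--Laplace asymptotics back to $Z_p(k;n)$, and identifying the crossover constant from the two $\pi_p$-moments --- is essentially the paper's Section~\ref{s:proof-thm}, up to a cosmetic change: you locate a moving simple pole $z_k$ by the implicit function theorem, whereas the paper uses the Part~I formula \refeq{Zk/Z0} expressed through $\hat A_p^{\sss(1)}(k)$ and $\hat B_p(k)$, i.e.\ through values and the $m$-derivative of $\hat\pi_p$ at $m_p$ only. That distinction is not purely stylistic at $p=\pc$: there $m_{\pc}=1$ is also (in general) the radius of convergence of $\hat\pi_p(k,\cdot)$, so for $k\ne0$ the would-be zero $z_k>m_p$ of $1-zp\hat D(k)\hat\pi_p(k,z)$ need not be an actual pole of any analytic continuation; the honest statement is the additive-error asymptotics \refeq{Zk/Z0}, which is what you should quote (and which suffices, since you only need $k=k_n\to0$). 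Your dominated-convergence treatment of \refeq{pi/1-D} is fine and even shows that moments of order exactly $\alpha\wedge2$ suffice for $\alpha\le2$ (the paper instead uses order $\alpha\wedge2+\delta$ with the cruder pointwise bound $1-\cos(k\cdot x)\le O(|k\cdot x|^{\alpha+\delta})$); for $\alpha>2$ your ``$\sim$'' step needs a small truncation argument or, as in the paper, a $(2+\delta)$-moment.

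The genuine gap is that the key input --- Proposition~\ref{prp:key}, the fractional spatial-moment bound $\sum_{(x,n)}|x|^{\alpha\wedge2+\delta}|\pi_p(x,n)|\,m_p^n<\infty$ uniformly up to $p=\pc$ --- is not proved; you only state a plan (``every lace diagram carries a double connection, hence decays faster than $D$''), and that heuristic is not a method. Part~I supplies no $x$-space bound of the type $\varphi_p(x,n)\le C|x|^{-d-\alpha}$ uniform in $n$ and in $p\le\pc$, so ``faster decay than $D$'' cannot simply be invoked; one must control the weighted diagrams uniformly in $p\in(0,\pc]$, in $m$ up to $m_p$, and in the expansion order $N$ with summable $N$-dependence. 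This is the bulk of the paper (Section~\ref{s:proof-prp}): the reduction to $|x_1|^r$-weights (Lemma~\ref{lmm:elementary}), the diagrammatic bounds of Lemma~\ref{lmm:|x1|piN-bd} in which the weight is split as $|x_1|^{\delta_1+\delta_2}$ with $\delta_1,\delta_2<\alpha\wedge2$ and distributed over the two chains of two-point functions, the integral representation \refeq{anyr} converting $|x_1|^\delta$ into $1-\cos(ux_1)$ so that the Fourier-space bounds \refeq{varphi-bd}--\refeq{Deltavarphi-bd} become applicable (Lemma~\ref{lmm:diagfurther}), and the estimates on the weighted triangles and bubbles $T'_{p,m},W'_{p,m},W''_{p,m}$, in particular the bound \refeq{Jbd} on $\hat J(u)$ which forces $\delta_1<d-2(\alpha\wedge2)$ (Lemma~\ref{lmm:Ii-bds}). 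None of this, nor any substitute for it, appears in your proposal; note also that even the assertion in the theorem that $\sum_{(x,n)}|x|^2\pi_p(x,n)m_p^n$ converges absolutely for $\alpha>2$ already requires this proposition. As it stands, the proposal proves only the (comparatively easy) transfer half of the theorem and assumes the part that constitutes the paper's actual contribution.
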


See, e.g., \cite[Section~3.1]{cs08} for the precise definition of $\pi_p(x,n)$.

The most remarkable observation in the above theorem is that the constant $C$
exhibits crossover at $\alpha=2$.  This phenomenon is observable if $\pi_p$,
which is model-dependent and contains information about interactions of
occupied paths, is nonzero.  We recall that, for the branching random walk,
occupied paths are independent and $\pi_p\equiv0$, hence $C$ is always 1 as in
\refeq{BRW}.  Therefore, the crossover behavior in \refeq{Cformula} is a result
of interactions among occupied paths.

We should emphasize that our approach developed in this paper and Part~I is
widely applicable, not only to our long-range oriented percolation, but also to
various other (long-range/finite-range) statistical-mechanical models.  For
example, our methods also apply to show that a similar result to the above
limit theorem holds for long-range self-avoiding walk with the characteristic
index $\alpha>0$, studied in \cite{hhs?}.  Markus Heydenreich is working in
this direction \cite{h?}.  His work will be a generalization of the results in
\cite{c00,yk88}, where $D(x)$ is proportional to $|x|^{-2}$ if $x$ is on the
coordinate axes, otherwise $D(x)=0$.  Since the coordinate axes are
1-dimensional, we should interpret $\alpha$ for this particular model as 1.

As another nontrivial application of the fractional-moment method of this
paper, one of the authors (LCC) will report in his ongoing work that the
gyration radius $\xi_p^{\sss(r)}$ of order $r\in(0,\alpha)$ for sufficiently
spread-out oriented percolation with $d>2(\alpha\wedge2)$ obeys
\begin{align*}
\xi_p^{\sss(r)}(n)\equiv\bigg(\frac1{Z_p(0;n)}\sum_{x\in\Zd}|x|^r\varphi_p
 (x,n)\bigg)^{1/r}\asymp
 \begin{cases}
 n^{\frac1{\alpha\wedge2}}&(\alpha\ne2),\\
 (n\log n)^{1/2}\quad&(\alpha=2),
 \end{cases}
\end{align*}
for every $p\in(0,\pc]$.

The rest of the paper is organized as follows.  In Section~\ref{s:recap}, we
summarize the relevant results from Part~I.  In Section~\ref{s:proof-thm}, we
prove Theorem~\ref{thm:main} subject to a key proposition on fractional moments
for the spatial variable of the lace-expansion coefficients.  We prove that
proposition in Section~\ref{s:proof-prp} using a certain integral
representation for fractional powers of positive reals.

\section{Summary of the relevant results from Part~I}\label{s:recap}
In this section, we summarize the results from Part~I that will be used in the
rest of the paper.

First, we introduce some notation.  Let
\begin{align*}
q_p(x,n)=\mP_p\Big(\big((o,0),(x,n)\big)\text{ is occupied}\Big)
 \equiv
 \begin{cases}
 pD(x)\quad&(n=1),\\
 0&(n\ne1).
 \end{cases}
\end{align*}
We denote the space-time convolution of functions $f$ and $g$ on
$\Zd\times\Zp$ by
\begin{align*}
(f*g)(x,n)=\sum_{(y,t)\in\Zd\times\Zp}f(y,t)\;g(x-y,n-t),
\end{align*}
and the Fourier-Laplace transform of $f$ by
\begin{align*}
\hat f(k,z)=\sum_{(x,n)\in\Zd\times\Zp}f(x,n)\,e^{ik\cdot x}z^n\qquad(k\in
 [-\pi,\pi]^d,~z\in\mC).
\end{align*}
Notice that $\frac{-1}2\Delta_k\hat f(l,z)$, defined as
\begin{align}\lbeq{DeltaFL}
\frac{-1}2\Delta_k\hat f(l,z)&=\hat f(l,z)-\frac{\hat f(l+k,z)+\hat f(l-k,z)}
 2\nn\\
&\equiv\sum_{(x,n)\in\Zd\times\Zp}\big(1-\cos(k\cdot x)\big)f(x,n)\,e^{il\cdot
 x}z^n,
\end{align}
is the Fourier-Laplace transform of $(1-\cos(k\cdot x))f(x,n)$.

In \cite[Section~3.1]{cs08}, we explained the derivation of the convolution
equation
\begin{align*}
\varphi_p(x,n)=\pi_p(x,n)+(\pi_p*q_p*\varphi_p)(x,n),
\end{align*}
where $\pi_p(x,n)$ is the alternating sum of the $\Zd$-symmetric nonnegative
lace-expansion coefficients $\pi_p^{\sss(N)}(x,n)$ for $N=0,1,2,\dots$:
\begin{align}\lbeq{piN-exp}
\pi_p(x,n)=\sum_{N=0}^\infty(-1)^N\pi_p^{\sss(N)}(x,n).
\end{align}
The precise definition of $\pi_p^{\sss(N)}$ is unimportant in this paper.
However, we will use the following properties of $\pi_p$ and $\varphi_p$:

\begin{proposition}
Let $\alpha>0$, $d>2(\alpha\wedge2)$ and $L\gg1$.  Then,
\begin{align}
pm_p\hat\pi_p(0,m_p)&=1,\lbeq{pmppi=1}\\
\sum_{(x,n)}n\,|\pi_p(x,n)|\,m^n&\le O(\lambda),\lbeq{npi-bd}\\
\sum_{(x,n)}\big(1-\cos(k\cdot x)\big)|\pi_p(x,n)|\,m^n&\le O(\lambda)\big(1
 -\hat D(k)\big),\lbeq{cospi-bd}
\end{align}
and
\begin{align}
|\hat\varphi_p(k,me^{i\theta})|&\le\frac{O(1)}{pm_p(1-\frac{m}{m_p})+|\theta|
 +1-\hat D(k)},\lbeq{varphi-bd}\\
|\Delta_k\hat\varphi_p(l,me^{i\theta})|&\le\sum_{(j,j')=(0,\pm1),(1,-1)}
 \frac{1-\hat D(k)}{pm_p(1-\frac{m}{m_p})+|\theta|+1-\hat D(l+jk)}\nn\\
&\hskip6pc\times\frac{O(1)}{pm_p(1-\frac{m}{m_p})+|\theta|+1-\hat D(l+j'k)},
 \lbeq{Deltavarphi-bd}
\end{align}
uniformly in $p\in(0,\pc]$, $m\in[0,m_p)$, $k,l\in[-\pi,\pi]^d$ and
$\theta\in[-\pi,\pi]$.
\end{proposition}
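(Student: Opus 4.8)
I would prove the Proposition by collecting and repackaging estimates that are essentially already present in Part~I, organized around the lace expansion $\varphi_p=\pi_p+\pi_p*q_p*\varphi_p$ and its Fourier--Laplace transform. The identity \refeq{pmppi=1} is just the statement that $z=m_p$ is the radius of convergence of $\hat\varphi_p(0,z)$: taking the Fourier--Laplace transform of the convolution equation gives $\hat\varphi_p(k,z)=\hat\pi_p(k,z)/(1-pz\hat D(k)\hat\pi_p(k,z))$ (using $\hat q_p(k,z)=pz\hat D(k)$), and $\hat\varphi_p(0,z)=\sum_n Z_p(0;n)z^n$ blows up exactly when the denominator vanishes, i.e. at $pm_p\hat D(0)\hat\pi_p(0,m_p)=pm_p\hat\pi_p(0,m_p)=1$ since $\hat D(0)=1$. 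The convergence of $\hat\pi_p(0,m_p)$ itself, and hence that $m_p<\infty$ and the denominator does not vanish earlier, is part of the lace-expansion bootstrap from Part~I; I would cite \cite[Section~3]{cs08}.

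For the weighted bounds \refeq{npi-bd} and \refeq{cospi-bd}, the plan is to quote the diagrammatic estimates on $\pi_p^{\sss(N)}$ from Part~I. In the spread-out regime each diagram for $\pi_p^{\sss(N)}$ carries a factor decaying geometrically in $N$ (a small constant $O(\lambda)$ per extra loop), so $\sum_N$ over \refeq{piN-exp} converges and the whole alternating sum is bounded by $O(\lambda)$ times the $N=1$ contribution. For \refeq{npi-bd} one writes $n=\sum_{j}(\text{time-length of the }j\text{th subwalk})$ and distributes the weight $n$ over the at most $2N$ constituent two-point functions in the diagram, using the already-established fact that $\sum_{(x,m)}m\,\varphi_p(x,m)m_p^m$ and the relevant ``displaced'' versions are $O(1)$ (this is where $d>2(\alpha\wedge2)=\dc$ enters, guaranteeing the random-walk/triangle-type bounds hold). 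For \refeq{cospi-bd} one uses the standard splitting $1-\cos(k\cdot x)\le\sum_{j}\big(1-\cos(k\cdot x_j)\big)\cdot(\text{something})$ along the decomposition $x=\sum x_j$ — more precisely the Cauchy--Schwarz/parallelogram trick $1-\cos(\sum_j t_j)\le (\#\text{terms})\sum_j(1-\cos t_j)$ — to move the cosine weight onto a single line segment, then bounds $\sum_{(x,m)}(1-\cos(k\cdot x))\varphi_p(x,m)m_p^m\le O(1)(1-\hat D(k))$, which is again a Part~I random-walk estimate, the remaining lines contributing $O(1)$ each and the geometric $N$-sum giving the overall $O(\lambda)$.

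The two-point function bounds \refeq{varphi-bd} and \refeq{Deltavarphi-bd} I would obtain from the resolvent formula $\hat\varphi_p(k,z)=\hat\pi_p(k,z)/(1-pz\hat D(k)\hat\pi_p(k,z))$ evaluated at $z=me^{i\theta}$. The numerator is $O(1)$ (uniformly, by \refeq{npi-bd}-type bounds with $m=0$ giving $|\hat\pi_p|\le 1+O(\lambda)$), so everything comes down to a lower bound of order $pm_p(1-m/m_p)+|\theta|+1-\hat D(k)$ on the modulus of the denominator $1-pme^{i\theta}\hat D(k)\hat\pi_p(k,me^{i\theta})$; this is proved in Part~I by comparing with the value $1$ at $(k,m,\theta)=(k,m_p,0)$, expanding the three "defects" $1-m/m_p$, $1-e^{i\theta}$, $1-\hat D(k)$ and using \refeq{npi-bd}, \refeq{cospi-bd} to control the error in $\hat\pi_p$. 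For \refeq{Deltavarphi-bd} one applies the second difference $\Delta_k$ to the quotient: the Leibniz-type expansion of $\Delta_k$ acting on $\hat\pi_p/(1-pz\hat D\hat\pi_p)$ produces, after using $\Delta_k(1-\cos(\cdot))$-weighted bounds, a sum over $(j,j')\in\{(0,\pm1),(1,-1)\}$ of a factor $1-\hat D(k)$ (from the "$\Delta_k$ falls on one line" term, bounded via \refeq{cospi-bd} and the lattice estimate $\Delta_k\hat D(l)\le O(1)(1-\hat D(k))$) times two resolvent denominators evaluated at shifted momenta $l+jk$, $l+j'k$; the three listed index pairs are exactly the combinations surviving the $\Zd$-symmetry and the elementary inequality for second differences of products. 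The main obstacle is bookkeeping: getting the denominator lower bound uniform in all four variables simultaneously, and tracking which shifted momenta appear in the $\Delta_k$-expansion — but all the analytic inputs (geometric convergence in $N$, the random-walk bounds above $\dc$, the defect expansion of the denominator) are imported wholesale from Part~I, so the Proposition is genuinely a summary rather than new work.
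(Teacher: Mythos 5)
Your plan for \refeq{pmppi=1}, \refeq{npi-bd} and \refeq{varphi-bd} is fine in spirit (these really are imported from Part~I), but the actual content of this Proposition is the pair \refeq{cospi-bd}--\refeq{Deltavarphi-bd}, and there your argument has a genuine gap. What Part~I proves is only the weaker bound $\sum_{(x,n)}(1-\cos(k\cdot x))|\pi_p(x,n)|\,m^n\le O(\lambda)\big(1-\tfrac{m}{m_p}+1-\hat D(k)\big)$ for $m<m_p$; the new step in the paper is to observe that the left-hand side is monotone increasing in $m$, let $m\uparrow m_p$ in Part~I's bound, and thereby obtain \refeq{cospi-bd} \emph{without} the $1-\tfrac m{m_p}$ defect, uniformly in $m<m_p$; this strengthened bound is then fed back through the argument of Part~I, Section~4.2, to get \refeq{Deltavarphi-bd} with numerator $1-\hat D(k)$ alone. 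Your proposal skips this monotonicity trick and instead claims \refeq{cospi-bd} follows directly from distributing $1-\cos(k\cdot x)$ onto a single line and invoking the ``Part~I random-walk estimate'' $\sum_{(x,n)}(1-\cos(k\cdot x))\varphi_p(x,n)\,m_p^n\le O(1)\big(1-\hat D(k)\big)$.

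That input is false. The left-hand side equals $\hat\varphi_p(0,m_p)-\mathrm{Re}\,\hat\varphi_p(k,m_p)$, and $\hat\varphi_p(0,m_p)=\sum_nZ_p(0;n)m_p^n=\infty$ (indeed $\hat\varphi_p(0,m)=\hat\pi_p(0,m)/(1-pm\hat\pi_p(0,m))$ and, by \refeq{pmppi=1}, the denominator vanishes as $m\uparrow m_p$ while the numerator stays bounded away from $0$), whereas $\mathrm{Re}\,\hat\varphi_p(k,m_p)$ is finite for $k\ne0$; so the weighted single-line sum diverges at $m=m_p$, and for $m<m_p$ it is only controlled with extra factors of $\big(pm_p(1-\tfrac m{m_p})\big)^{-1}$. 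A gain from $1-\cos(k\cdot x)$ is available only when the weighted line stays inside a closed loop of the diagram, i.e.\ when in Fourier space $\Delta_k\hat\varphi_p$ or $\Delta_k\hat D$ is integrated against at least two further propagators so that the $(l,\theta)$-integral converges for $d>\dc$; carried out at $m<m_p$ as in Part~I, exactly this analysis is what produces the unwanted $1-\tfrac m{m_p}$ term, and eliminating it requires either the monotonicity argument above or a genuinely new $m$-uniform Fourier estimate of the kind developed in Section~\ref{s:proof-prp} of this paper --- so the Proposition is not merely bookkeeping on top of Part~I. (A smaller omission: the claimed uniformity up to $p=\pc$ also needs the remark that $m_{\pc}=1$ and that the $p<\pc$ bounds pass to the limit for $m$ strictly below the radius of convergence.)
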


\begin{proof}
The identity \refeq{pmppi=1} for every $p\in(0,\pc]$ was proved in
\cite[(2.17) and (2.22)]{cs08}.  The bounds \refeq{npi-bd} and
\refeq{varphi-bd} for $p\in(0,\pc)$ were also proved in Part~I, and
can be extended up to $p=\pc$, as long as $m$ is strictly less than
the radius of convergence, $m_{\pc}=1$ (cf.,
\cite[Corollary~1.3]{cs08}).

The same extension applies to the bounds \refeq{cospi-bd} and
\refeq{Deltavarphi-bd}, if they hold uniformly in $p\in(0,\pc)$ and
$m\in[0,m_p)$.  In Part~I, we showed that
\begin{align*}
\sum_{(x,n)}\big(1-\cos(k\cdot x)\big)|\pi_p(x,n)|\,m^n\le O(\lambda)\bigg(1-
 \frac{m}{m_p}+1-\hat D(k)\bigg),
\end{align*}
uniformly in $p\in(0,\pc)$, $m\in[0,m_p)$ and $k\in[-\pi,\pi]^d$.
However, since the left-hand side is increasing in $m<m_p$, we
obtain
\begin{align*}
\sum_{(x,n)}\big(1-\cos(k\cdot x)\big)|\pi_p(x,n)|\,m^n&\le\lim_{m\uparrow m_p}
 \sum_{(x,n)}\big(1-\cos(k\cdot x)\big)|\pi_p(x,n)|\,m^n\\
&\le O(\lambda)\lim_{m\uparrow m_p}\bigg(1-\frac{m}{m_p}+1-\hat D(k)\bigg)\\
&\le O(\lambda)\big(1-\hat D(k)\big),
\end{align*}
as required.  Using this stronger bound and following the steps in
\cite[Section~4.2]{cs08}, we also obtain \refeq{Deltavarphi-bd}.
This completes the proof.
\end{proof}

Finally, we summarize the results for the $n^\text{th}$ coefficient $Z_p(k;n)$
of the series expansion of $\hat\varphi_p(k,m)$ in powers of $m$:
$\hat\varphi_p(k,m)\equiv\sum_{n=0}^\infty Z_p(k;n)\,m^n$.
Let (cf., \cite[(2.33)--(2.34)]{cs08})
\begin{align}
\hat A_p^{\sss(1)}(k)&=\hat D(k)+\frac{m_p\partial_m\hat\pi_p(k,m_p)}{pm_p
 \hat\pi_p(k,m_p)^2},\lbeq{A1-def}\\
\hat B_p(k)&=1-\hat D(k)+\frac{\hat\pi_p(0,m_p)-\hat\pi_p(k,m_p)}{\hat\pi_p
 (k,m_p)},\lbeq{B-def}
\end{align}
where $\partial_m\hat\pi_p(k,m_p)=\lim_{m\uparrow m_p}\partial_m\hat\pi_p (k,m)$.
Notice that, by \refeq{npi-bd}--\refeq{cospi-bd},
\begin{align}
|m_p\partial_m\hat\pi_p(k,m_p)|&\le\lim_{m\uparrow m_p}\sum_{(x,n)}n\,|\pi_p
 (x,n)|\,m^n\le O(\lambda),\lbeq{partialpi-bd}\\
|\hat\pi_p(0,m_p)-\hat\pi_p(k,m_p)|&\le\sum_{(x,n)}\big(1-\cos(k\cdot x)\big)
 |\pi_p(x,n)|\,m_p^n\le O(\lambda)\big(1-\hat D(k)\big),\lbeq{Deltapi-bd}
\end{align}
where the $O(\lambda)$ terms are uniform in $p\in(0,\pc]$ and
$k\in[-\pi,\pi]^d$.  Moreover, since $\pi_p(x,0)$ equals the Kronecker delta
$\delta_{x,o}$ (cf., \cite[(3.2)]{cs08}), we have
$\hat\pi_p(k,m_p)=1+O(\lambda)$ and thus $\hat A_p^{\sss(1)}(k)+\hat
B_p(k)=1+O(\lambda)$ uniformly in $p\in(0,\pc]$ and $k\in[-\pi,\pi]^d$.

In \cite[Section~2.4]{cs08}, we showed that, for $\alpha>0$,
$d>2(\alpha\wedge2)$, $L\gg1$ and
$\epsilon\in(0,1\wedge\frac{d-2(\alpha\wedge2)}{\alpha\wedge2})$,
\begin{align*}
m_p^nZ_p(k;n)=\frac1{pm_p\big(\hat A_p^{\sss(1)}(k)+\hat B_p(k)\big)}\bigg(
 \frac{\hat A_p^{\sss(1)}(k)}{\hat A_p^{\sss(1)}(k)+\hat B_p(k)}\bigg)^n
 +O(n^{-\epsilon}),
\end{align*}
hence
\begin{align}\lbeq{Zk/Z0}
\frac{Z_p(k;n)}{Z_p(0;n)}=\frac{\hat A_p^{\sss(1)}(0)}{\hat A_p^{\sss(1)}(k)
 +\hat B_p(k)}\bigg(\frac{\hat A_p^{\sss(1)}(k)}{\hat A_p^{\sss(1)}(k)+\hat
 B_p(k)}\bigg)^n+O(n^{-\epsilon}),
\end{align}
uniformly in $p\in(0,\pc]$ and $k\in[-\pi,\pi]^d$.  To prove
Theorem~\ref{thm:main}, it thus suffices to investigate the first term in
\refeq{Zk/Z0}.

\section{Proof of Theorem~\ref{thm:main} subject to a key proposition}
 \label{s:proof-thm}
In this section, we first prove Theorem~\ref{thm:main} assuming convergence of
fractional moments for the spatial variable of $\pi_p$, as stated in the
following proposition:

\begin{proposition}\label{prp:key}
Let $\alpha>0$, $d>2(\alpha\wedge2)$, $L\gg1$ and
\begin{align}\lbeq{delta-def}
\delta
 \begin{cases}
 \in\big(0,\alpha\wedge2\wedge(d-2(\alpha\wedge2))\big)\quad&(\alpha\ne2),\\
 =0&(\alpha=2).
 \end{cases}
\end{align}
Then, for any $p\in(0,\pc]$,
\begin{align}\lbeq{|x|pi}
\sum_{(x,n)}|x|^{\alpha\wedge2+\delta}|\pi_p(x,n)|\,m_p^n<\infty.
\end{align}
\end{proposition}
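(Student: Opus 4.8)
}
The plan is to pass to the diagrammatic expansion $|\pi_p(x,n)|\le\sum_{N\ge0}\pi_p^{\sss(N)}(x,n)$ and to bound the weighted sum in \refeq{|x|pi} for each $N$ separately; the $N=0$ term $\pi_p^{\sss(0)}(x,n)=\delta_{x,o}\,\delta_{n,0}$ contributes nothing. Put $s=\alpha\wedge2+\delta$; by \refeq{delta-def} one has $s\in(0,2(\alpha\wedge2))\subset(0,4)$. The first step is to remove the spatial weight by the integral representation
\begin{align*}
|x|^s=c_{d,s}\int_{\Rd}\frac{\big(1-\cos(k\cdot x)\big)^2}{|k|^{d+s}}\,dk\qquad(x\in\Rd),
\end{align*}
valid for $0<s<4$ with $c_{d,s}\in(0,\infty)$ (the change of variables $k\mapsto k/|x|$ reduces the integral to $|x|^s$ times a finite positive rotation-invariant constant; for $s<2$ the first-order kernel $1-\cos(k\cdot x)$ would serve equally well). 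Thus, since $\pi_p^{\sss(N)}\ge0$, $\sum_{(x,n)}|x|^s\pi_p^{\sss(N)}(x,n)m_p^n=c_{d,s}\int_{\Rd}|k|^{-d-s}\big[\sum_{(x,n)}(1-\cos(k\cdot x))^2\pi_p^{\sss(N)}(x,n)m_p^n\big]\,dk$, and the problem reduces to estimating, for $|k|$ small, this doubly-$(1-\cos)$-weighted diagram.

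The heart of the matter is that last estimate, and here one uses the defining feature of the lace expansion: the diagrammatic estimate for $\pi_p^{\sss(N)}$ with $N\ge1$ carries the total displacement $x$ simultaneously along two line-disjoint paths through the diagram (a reflection of the underlying double-connection event), so that $x$ is ``carried twice''. Writing $x=\sum_{i\in P_1}y_i=\sum_{j\in P_2}y_j$ as sums over the lines of two such disjoint paths and applying the subadditivity bound $1-\cos(a+b)\le2(1-\cos a)+2(1-\cos b)$ to each of the two factors of $(1-\cos(k\cdot x))^2$ separately, one dominates it by $2^{O(N)}\sum_{i\in P_1,\,j\in P_2}(1-\cos(k\cdot y_i))(1-\cos(k\cdot y_j))$; since $P_1$ and $P_2$ share no line, the two insertions land on distinct propagators. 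Passing to Fourier variables and applying \refeq{Deltavarphi-bd} to the two lines carrying insertions and \refeq{varphi-bd} to the remaining ones produces two factors of $1-\hat D(k)$, at the price of replacing each affected propagator by a product of two shifted propagators, giving a bound of the schematic form
\begin{align*}
\sum_{(x,n)}\big(1-\cos(k\cdot x)\big)^2\pi_p^{\sss(N)}(x,n)\,m_p^n\le(C\lambda)^N\,\big(1-\hat D(k)\big)^2\,I_N(k),
\end{align*}
uniformly in $p\in(0,\pc)$ and $m<m_p$ — hence, since the left side increases in $m$, also at $m=m_p$ — where $I_N(k)$ collects the remaining momentum integrations. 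These converge by the bubble-type bounds of Part~I because $d>2(\alpha\wedge2)$; $I_N(k)$ is bounded for $|k|$ away from $0$ and grows at most like a fixed power of $\frac1{|k|}$ near $0$ on account of the extra shifted propagators. (The combinatorial factor $2^{O(N)}$ has been absorbed into $C^N$.)

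It remains to integrate over $k$ and sum over $N$. By \refeq{valpha}, $1-\hat D(k)\asymp|k|^{\alpha\wedge2}$ for small $k$ (with an extra $\log\frac1{|k|}$ at $\alpha=2$) and $1-\hat D(k)\le O(1)$ everywhere, so $\int_{\Rd}|k|^{-d-s}(1-\hat D(k))^2I_N(k)\,dk<\infty$: near the origin the two gained powers contribute $|k|^{2(\alpha\wedge2)-d-s}=|k|^{(\alpha\wedge2)-\delta-d}$, integrable precisely because $\delta<\alpha\wedge2$, while the possible near-origin growth of $I_N(k)$ is exactly compensated by the second constraint $\delta<d-2(\alpha\wedge2)$ of \refeq{delta-def}; and for $|k|$ bounded away from $0$ the replacement of the $\Zd$-sum by an integral over $\Rd$ is harmless, the large-$|k|$ contribution being controlled by $(1-\cos)^2\le4$ together with $\sum_{(x,n)}\pi_p^{\sss(N)}(x,n)m_p^n\le(C\lambda)^N$. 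Consequently $\sum_{(x,n)}|x|^s\pi_p^{\sss(N)}(x,n)m_p^n\le(C'\lambda)^N$ for a finite $C'$, and for $L$ large enough that $C'\lambda<1$ the sum over $N$ converges and proves \refeq{|x|pi} for $p\in(0,\pc)$. The extension to $p=\pc$ is immediate, exactly as in the proof of the preceding proposition: the left side of \refeq{|x|pi} increases in $m<m_p$, all bounds above are uniform in $p\le\pc$ and $m<m_p$, and $m_{\pc}=1$.

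The step I expect to be the real obstacle is the middle one: distributing the \emph{non-integer} spatial weight $|x|^s$ over the two line-disjoint connections of $\pi_p^{\sss(N)}$ so as to genuinely gain \emph{both} powers of $1-\hat D(k)$, while keeping the residual momentum integrals convergent after the shifted-propagator replacements and tracking all constants' dependence on $N$. It is exactly here that the integral representation above earns its keep: it linearizes $|x|^s$ into an average of ordinary differences $(1-\cos(k\cdot x))^2$, reducing everything to the Fourier estimates \refeq{varphi-bd}--\refeq{Deltavarphi-bd} inherited from Part~I.
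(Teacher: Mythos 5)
Your architecture is essentially the paper's --- exploit the two line-disjoint connections in the diagrammatic bounds for $\pi_p^{\sss(N)}$, linearize the fractional power by a $(1-\cos)$ integral representation, and feed the resulting insertions into the Part~I bounds \refeq{varphi-bd}--\refeq{Deltavarphi-bd} --- but the step you yourself flag as ``the real obstacle'' is precisely where the proof lives, and you have not supplied it. You assert that the residual momentum integrals $I_N(k)$ grow near $k=0$ at most like a fixed power of $1/|k|$ which is ``exactly compensated'' by $\delta<d-2(\alpha\wedge2)$. For that to be true, the blow-up must be at most \emph{one} triangle-with-insertion factor, i.e.\ of order $\hat J(u)=O(u^{(d-3(\alpha\wedge2))\wedge0})$ as in \refeq{J-def}--\refeq{Jbd}; then the $k$-integral converges exactly when $\delta<\alpha\wedge2\wedge(d-2(\alpha\wedge2))$. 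But in your scheme a single momentum $k$ carries the full weight $s=\alpha\wedge2+\delta$, and nothing in your argument prevents both insertions from landing on triangle-like subdiagrams; the naive bound then produces \emph{two} such factors at the same momentum, $(1-\hat D(k))^2\hat J(k)^2$, and the power counting only gives $\delta<2\big(d-2(\alpha\wedge2)\big)-(\alpha\wedge2)$, which is strictly weaker than \refeq{delta-def} for $d<3(\alpha\wedge2)$ and vacuous for $2(\alpha\wedge2)<d\le\frac52(\alpha\wedge2)$ --- there not even some $\delta>0$ would follow. The paper avoids this by two devices you omit: it splits the exponent as $\delta_1+\delta_2$ with $\delta_1$ barely larger than $\delta$ and $\delta_2<\alpha\wedge2$, each with its own integral variable, and, crucially, for each placement of the weights it selects the member $j$ of the family of diagrammatic bounds \refeq{piN-dec2} adapted to that placement, so that the $\delta_2$-weight always sits on a bubble ($W'$, $W''$ --- no $\hat J$) and only the $\delta_1$-weight can sit on a triangle ($T'$ --- one $\hat J$); this bookkeeping (Lemma~4.3, bound \refeq{|x1|piN-bd}), together with the explicit computation \refeq{Jbd} and the two-variable estimate \refeq{I4bd3}, is what makes the admissible range come out as \refeq{delta-def}. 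The same bookkeeping would rescue your single-$k$ variant (and would even simplify the $\hat I_4$-analogue, since with equal momenta at most two of the three shifted propagators can coincide), but as written the key estimate is asserted, not proved.

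Two smaller points. First, your claim that the $N=0$ term ``contributes nothing'' is wrong: by \refeq{pi0-dec}, $\pi_p^{\sss(0)}$ is the double-connection probability, bounded by $\psi_p(\xvec)^2$ rather than a Kronecker delta, and it is exactly the $W''$-type bubble that your method must (and can) handle. Second, the uniformity in $N$ with geometrically small constants --- $T_{p,m}=O(\lambda)$ while the weighted diagrams are only $O(1)$ --- is what allows the sum over $N$ to converge; this is the content of Lemma~4.5 and must be established, not absorbed silently into ``$(C\lambda)^N$''. The extension from $p<\pc$, $m<m_p$ to $p\le\pc$, $m=m_p$ by monotonicity is fine and matches the paper.
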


We will roughly explain why $\delta$ is chosen as in \refeq{delta-def},
after the proof of Theorem~\ref{thm:main} is completed.  The proof of
Proposition~\ref{prp:key} is deferred to Section~\ref{s:proof-prp}.

\begin{proof}[Proof of Theorem~\ref{thm:main} subject to Proposition~\ref{prp:key}]
As explained at the end of Section~\ref{s:recap}, it suffices to investigate
the term
\begin{align*}
\bigg(\frac{\hat A_p^{\sss(1)}(k)}{\hat A_p^{\sss(1)}(k)+\hat B_p(k)}\bigg)^n
 \equiv\Bigg(\bigg(1+\frac{\hat B_p(k)}{\hat A_p^{\sss(1)}(k)}\bigg)^{-\frac{
 \hat A_p^{\sss(1)}(k)}{\hat B_p(k)}}\Bigg)^{\frac{n(1-\hat D(k))}{\hat A_p^{
 \sss(1)}(k)}\frac{\hat B_p(k)}{1-\hat D(k)}}.
\end{align*}
Notice that, by \refeq{A1-def}--\refeq{Deltapi-bd} and \refeq{n(1-D)},
\begin{align*}
\bigg(1+\frac{\hat B_p(k)}{\hat A_p^{\sss(1)}(k)}\bigg)^{-\frac{\hat A_p^{\sss
 (1)}(k)}{\hat B_p(k)}}\underset{|k|\to0}\to e^{-1},&&&&
\frac{n(1-\hat D(k_n))}{\hat A_p^{\sss(1)}(k_n)}\underset{n\to\infty}\to
 \frac{|k|^{\alpha\wedge2}}{\hat A_p^{\sss(1)}(0)},
\end{align*}
where
\begin{align*}
\hat A_p^{\sss(1)}(0)&=1+pm_p\sum_{(x,n)}n\,\pi_p(x,n)\,m_p^n.
\end{align*}
Moreover,
\begin{align*}
\frac{\hat B_p(k)}{1-\hat D(k)}&=1+pm_p\,\frac{\hat\pi_p(0,m_p)}{\hat\pi_p(k,
 m_p)}\,\frac{\hat\pi_p(0,m_p)-\hat\pi_p(k,m_p)}{1-\hat D(k)}\nn\\
&\underset{|k|\to0}\to1+pm_p\lim_{|k|\to0}\frac{\hat\pi_p(0,m_p)
 -\hat\pi_p(k,m_p)}{1-\hat D(k)},
\end{align*}
if the limit exists.  To complete the proof of Theorem~\ref{thm:main}, it
remains to show
\begin{align}\lbeq{pi/1-D}
\lim_{|k|\to0}\frac{\hat\pi_p(0,m_p)-\hat\pi_p(k,m_p)}{1-\hat D(k)}=
 \begin{cases}
 \dpst\frac1{\sigma^2}\sum_{(x,n)}|x|^2\pi_p(x,n)\,m_p^n\quad&(\alpha>2),\\
 0&(\alpha\le2).
 \end{cases}
\end{align}

Now we choose $\delta$ as in \refeq{delta-def} and use
Proposition~\ref{prp:key} to prove \refeq{pi/1-D} for (i)~$\alpha\le2$ and
(ii)~$\alpha>2$, separately.

\smallskip

(i) Let $\alpha\le2$ and $\alpha+\delta\le2$.  Then, we have
\begin{align*}
0\le1-\cos(k\cdot x)\le O(|k\cdot x|^{\alpha+\delta}).
\end{align*}
By the spatial symmetry of the model and using \refeq{|x|pi} with $\delta$
satisfying $\alpha+\delta\le2$ and \refeq{delta-def},
\begin{align*}
|\hat\pi_p(0,m_p)-\hat\pi_p(k,m_p)|&=\bigg|\sum_{(x,n)\in\Zd\times\Zp}
 \big(1-\cos(k\cdot x)\big)\,\pi_p(x,n)\,m_p^n\bigg|\\
&\le O(|k|^{\alpha+\delta})\sum_{(x,n)\in\Zd\times\Zp}|x|^{\alpha+\delta}
 |\pi_p(x,n)|\,m_p^n\\
&=O(|k|^{\alpha+\delta}).
\end{align*}
By \refeq{valpha}, we thus obtain that, for small $|k|$,
\begin{align*}
\frac{|\hat\pi_p(0,m_p)-\hat\pi_p(k,m_p)|}{1-\hat D(k)}\le
 \begin{cases}
 O(|k|^\delta)&(\alpha<2),\\
 O(1/\log\frac1{|k|})\quad&(\alpha=2).
 \end{cases}
\end{align*}
This yields \refeq{pi/1-D} for $\alpha\le2$.

\smallskip

(ii) Let $\alpha>2$ and $\delta\le2$.  By the Taylor expansion,
\begin{align*}
1-\cos(k\cdot x)=\frac{(k\cdot x)^2}2+O(|k\cdot x|^{2+\delta}).
\end{align*}
Then, by the spatial symmetry of the model and using \refeq{|x|pi} with $\delta$ satisfying \refeq{delta-def},
\begin{align}\lbeq{pi0-pik>2}
\hat\pi_p(0,m_p)-\hat\pi_p(k,m_p)=\frac{|k|^2}{2d}\sum_{(x,n)\in\Zd\times\Zp}
 |x|^2\pi_p(x,n)\,m_p^n+O(|k|^{2+\delta}).
\end{align}
The limit \refeq{pi/1-D} for $\alpha>2$ follows from \refeq{pi0-pik>2} and the
asymptotics \refeq{valpha} with $v_\alpha=\frac{\sigma^2}{2d}$.

\smallskip

This completes the proof of Theorem~\ref{thm:main} subject to
Proposition~\ref{prp:key}.
\end{proof}

Before closing this section, we roughly explain why
$\delta<\alpha\wedge2\wedge(d-2(\alpha\wedge2))$ for $\alpha\ne2$ (the
necessity of $\delta=0$ for $\alpha=2$ and $\delta>0$ for $\alpha\ne2$ is
obvious from the above proof of Theorem~\ref{thm:main}).  This is a sort of
preview of Section~\ref{s:proof-prp}.

In Section~\ref{s:proof-prp}, we will use diagrammatic bounds on the expansion
coefficients $\pi_p^{\sss(N)}$ in \refeq{piN-exp}.  In each bound (cf.,
\refeq{pi0-dec}--\refeq{piN-dec2} below), there are \emph{two} sequences of
two-point functions from $(o,0)$ to $(x,n)$.  To bound
$\sum_{(x,n)}|x|^{\alpha\wedge2+\delta}\pi_p^{\sss(N)}(x,n)m^n$, we will split
the power $\alpha\wedge2+\delta$ into $\delta_1$ and $\delta_2$, and multiply
one of the aforementioned two sequences of two-point functions by
$|x|^{\delta_1}$ and the other by $|x|^{\delta_2}$.  Here, we choose $\delta_1$
and $\delta_2$ both less than $\alpha\wedge2$, so as to potentially control the
weighted two-point functions, like $|y|^{\delta_1}\varphi_p(y,s)$.

Then, $\sum_{(x,n)}|x|^{\alpha\wedge2+\delta}\pi_p^{\sss(N)}(x,n)m^n$ will be
bounded by the product of diagram functions (cf., Lemma~\ref{lmm:|x1|piN-bd}
below).  Those diagram functions are the ``triangle" $T_{p,m}$, which is
independent of $\delta_1$ and $\delta_2$, its weighted version
$T'_{p,m}(\delta_1)$ and the weighted ``bubbles" $W'_{p,m}(\delta_2)$ and
$W''_{p,m}(\delta_1,\delta_2)$ (cf., \refeq{T-def}--\refeq{W''-def} below).  As
shown in Section~\ref{ss:bounds}, it is not hard to bound $W'_{p,m}(\delta_2)$
uniformly in $p$ and $m$ for $d>2(\alpha\wedge2)$ and $L\gg1$ as long as
$\delta_2<\alpha\wedge2$.  However, to bound $T'_{p,m}(\delta_1)$ and
$W''_{p,m}(\delta_1,\delta_2)$ uniformly in $p$ and $m$, we will have to choose
$\delta_1$ to be small depending on how close $d$ is to the upper-critical
dimension $2(\alpha\wedge2)$.  As described in Lemma~\ref{lmm:Ii-bds} below, we
will choose $\delta_1$ less than $d-2(\alpha\wedge2)$.

To summarize the above, we have
\begin{align*}
0<\delta_1<\alpha\wedge2\wedge\big(d-2(\alpha\wedge2)\big),&&
0<\delta_2<\alpha\wedge2,&&
\delta_1+\delta_2=\alpha\wedge2+\delta.
\end{align*}
To satisfy all, it suffices to choose $\delta_1$ ``slightly" larger than
$\delta$ and let $\delta_2=\alpha\wedge2-(\delta_1-\delta)$.  This is why we
choose $\delta<\alpha\wedge2\wedge(d-2(\alpha\wedge2))$ when $\alpha\ne2$.

\section{Proof of Proposition~\ref{prp:key}}\label{s:proof-prp}
Finally, in this section, we prove Proposition~\ref{prp:key}.  First, in
Section~\ref{ss:diagbds}, we bound fractional moments for the spatial variable
of the expansion coefficients $\pi_p^{\sss(N)}$ in \refeq{piN-exp} in terms of
certain diagram functions.  In Section~\ref{ss:prelim}, we use an integral
representation of $a^\delta$ for $a>0$ and $\delta\in(0,2)$, which is the key to the proof of Proposition~\ref{prp:key}. In Section~\ref{ss:bounds}, we show that the aforementioned diagram functions are convergent, and complete the proof of
Proposition~\ref{prp:key}.

\subsection{Diagrammatic bounds on the expansion coefficients}
 \label{ss:diagbds}
In this subsection, we bound $\sum_{(x,n)}|x|^r|\pi_p(x,n)|\,m^n$ for $r>0$ in
terms of the diagram functions $T_{p,m}$, $T'_{p,m}$, $W'_{p,m}$ and
$W''_{p,m}$ defined in Lemma~\ref{lmm:|x1|piN-bd} below.

First, we show the following elementary inequality:

\begin{lemma}\label{lmm:elementary}
For any $r>0$ and $m\ge0$,
\begin{align*}
\sum_{(x,n)}|x|^r|\pi_p(x,n)|\,m^n\le d^{\frac{r}2+1}\sum_{N=0}^\infty
 \sum_{(x,n)}|x_1|^r\pi_p^{\sss(N)}(x,n)\,m^n,
\end{align*}
where $x_1$ is the first coordinate of $x\equiv(x_1,\dots,x_d)$.
\end{lemma}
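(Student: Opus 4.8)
The plan is to reduce the statement to the coordinatewise symmetry of the nonnegative coefficients $\pi_p^{\sss(N)}$ together with an elementary pointwise comparison between $|x|^r$ and $|x_1|^r,\dots,|x_d|^r$. First I would apply the triangle inequality to the alternating sum \refeq{piN-exp} and use $\pi_p^{\sss(N)}\ge0$ to obtain the pointwise bound $|\pi_p(x,n)|\le\sum_{N=0}^\infty\pi_p^{\sss(N)}(x,n)$. Multiplying by $|x|^r m^n\ge0$ and summing over $(x,n)$ — since every term is nonnegative, Tonelli's theorem justifies interchanging the sums over $N$ and over $(x,n)$, the resulting identity being valid in $[0,\infty]$ — it then suffices to bound $\sum_{(x,n)}|x|^r\pi_p^{\sss(N)}(x,n)\,m^n$ for each fixed $N$ by $d^{r/2+1}\sum_{(x,n)}|x_1|^r\pi_p^{\sss(N)}(x,n)\,m^n$.

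For the pointwise comparison I would write, for any $x=(x_1,\dots,x_d)$ and any $r>0$,
$|x|^r=\big(\textstyle\sum_{j=1}^d x_j^2\big)^{r/2}\le\big(d\max_{1\le j\le d}x_j^2\big)^{r/2}=d^{r/2}\max_{1\le j\le d}|x_j|^r\le d^{r/2}\sum_{j=1}^d|x_j|^r$,
where the first inequality uses $\sum_j x_j^2\le d\max_j x_j^2$ and monotonicity of $t\mapsto t^{r/2}$ on $[0,\infty)$. Substituting this into $\sum_{(x,n)}|x|^r\pi_p^{\sss(N)}(x,n)\,m^n$ yields the bound $d^{r/2}\sum_{j=1}^d\sum_{(x,n)}|x_j|^r\pi_p^{\sss(N)}(x,n)\,m^n$.

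Finally I would invoke the $\Zd$-symmetry of $\pi_p^{\sss(N)}$ recalled from Part~I — in particular its invariance under permutations of the coordinate axes — which forces $\sum_{(x,n)}|x_j|^r\pi_p^{\sss(N)}(x,n)\,m^n$ to take the same value for every $j\in\{1,\dots,d\}$. Hence the $j$-sum contributes exactly a factor $d$, giving $\sum_{(x,n)}|x|^r\pi_p^{\sss(N)}(x,n)\,m^n\le d^{r/2+1}\sum_{(x,n)}|x_1|^r\pi_p^{\sss(N)}(x,n)\,m^n$; summing over $N$ and recalling the first step completes the argument. I do not expect a genuine obstacle here: the only points needing a word of care are the legitimacy of interchanging the $N$- and $(x,n)$-summations (covered by nonnegativity and Tonelli) and the explicit appeal to coordinate-permutation invariance of $\pi_p^{\sss(N)}$, which is part of the $\Zd$-symmetry already established in \cite{cs08}.
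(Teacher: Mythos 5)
Your proposal is correct and follows essentially the same route as the paper: bound $|x|^r$ pointwise by $d^{r/2}\sum_{j=1}^d|x_j|^r$ (your $\max$-argument and the paper's comparison via $\|x\|_r$ give the identical inequality), control $|\pi_p|$ by $\sum_N\pi_p^{\sss(N)}$ using nonnegativity, and collapse the $j$-sum to a factor $d$ by the coordinate-permutation part of the $\Zd$-symmetry. The only differences are cosmetic (order of steps and the explicit mention of Tonelli, which the paper leaves implicit since all terms are nonnegative).
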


\begin{proof}
For any $r>0$, we have
\begin{align*}
|x|^r=\bigg(\sum_{j=1}^d|x_j|^2\bigg)^{r/2}\le\bigg(\sum_{j=1}^d\|x\|_r^2
 \bigg)^{r/2}=d^{r/2}\|x\|_r^r\equiv d^{r/2}\sum_{j=1}^d|x_j|^r.
\end{align*}
By this inequality and using the nonnegativity and the spatial symmetry of
$\pi_p^{\sss(N)}$, we obtain
\begin{align*}
\sum_{(x,n)}|x|^r|\pi_p(x,n)|\,m^n&\le d^{r/2}\sum_{j=1}^d\sum_{(x,n)}|x_j|^r
 \bigg|\sum_{N=0}^\infty(-1)^N\pi_p^{\sss(N)}(x,n)\bigg|\,m^n\\
&\le d^{r/2}\sum_{j=1}^d\sum_{N=0}^\infty\sum_{(x,n)}|x_j|^r\pi_p^{\sss(N)}(x,
 n)\,m^n\\
&=d^{\frac{r}2+1}\sum_{N=0}^\infty\sum_{(x,n)}|x_1|^r\pi_p^{\sss(N)}(x,n)\,m^n,
\end{align*}
as required.
\end{proof}

Next, we use \cite[Lemma~1]{s??} to investigate
$\sum_{(x,n)}|x_1|^r\pi_p^{\sss(N)}(x,n)\,m^n$.  For notational convenience,
we denote vertices in $\mZ^{d+1}$ by bold letters, e.g., $\ovec\equiv(o,0)$ and
$\xvec=(x,t_{\xvec})$, where $t_{\xvec}$ is the temporal part of $\xvec$.  Let
\begin{align*}
\psi_p(\xvec)=(q_p*\varphi_p)(\xvec).
\end{align*}
Given a sequence of vertices $\yvec_1,\dots,\yvec_j\in\mZ^{d+1}$, we write
\begin{align*}
\vec\yvec_j=\sum_{i=1}^j\yvec_i.
\end{align*}
For $\yvec_1,\zvec_1,\yvec_2,\zvec_2,\dots\in\mZ^{d+1}$, we define
\begin{align*}
\Lambda_p(\vec\yvec_{i-1},\vec\zvec_{i-1};\vec\yvec_i,\vec\zvec_i)&=\psi_p
 (\yvec_i)\,\psi_p(\zvec_i)\,\frac{\varphi_p(\vec\yvec_i-\vec\zvec_i)+
 \varphi_p(\vec\zvec_i-\vec\yvec_i)}{2^{\delta_{\vec\yvec_i,\vec\zvec_i}}},\\
\tilde\Lambda_p(\vec\yvec_i,\vec\zvec_i;\vec\yvec_{i+1},\vec\zvec_{i+1})&=
 \frac{\varphi_p(\vec\yvec_i-\vec\zvec_i)+\varphi_p(\vec\zvec_i-\vec\yvec_i)}
 {2^{\delta_{\vec\yvec_i,\vec\zvec_i}}}\,\psi_p(\yvec_{i+1})\,\psi_p(\zvec_{i
 +1}).
\end{align*}

\begin{lemma}[Equivalent to Lemma~1 \cite{s??}]\label{lmm:diag}
For $N=0$,
\begin{align}\lbeq{pi0-dec}
0\le\pi_p^{\sss(0)}(\xvec)-\delta_{\xvec,\ovec}\le\psi_p(\xvec)^2.
\end{align}
For $N\ge1$,
\begin{align}\lbeq{piN-dec1}
\pi_p^{\sss(N)}(\xvec)&\le\sum_{\substack{\yvec_1,\dots,\yvec_{N+1}\\ \zvec_1,
 \dots,\zvec_{N+1}\\ (\vec\yvec_{N+1}=\vec\zvec_{N+1}=\xvec)\\ (t_{\yvec_1}\ge
 t_{\zvec_1})}}\varphi_p(\yvec_1)\,\varphi_p(\zvec_1)\prod_{i=1}^N\tilde
 \Lambda_p(\vec\yvec_i,\vec\zvec_i;\vec\yvec_{i+1},\vec\zvec_{i+1}),
\end{align}
and, for any $j\in\{2,\dots,N+1\}$,
\begin{align}\lbeq{piN-dec2}
\pi_p^{\sss(N)}(\xvec)\le\sum_{\substack{\yvec_1,\dots,\yvec_{N+1}\\ \zvec_1,
 \dots,\zvec_{N+1}\\ (\vec\yvec_{N+1}=\vec\zvec_{N+1}=\xvec)}}&\varphi_p
 (\yvec_1)\,\varphi_p(\zvec_1)\,\varphi_p(\yvec_1-\zvec_1)\bigg(\prod_{i=2}^{j
 -1}\Lambda_p(\vec\yvec_{i-1},\vec\zvec_{i-1};\vec\yvec_i,\vec\zvec_i)\bigg)
 \nn\\
&\times\psi_p(\yvec_j)\,\psi_p(\zvec_j)\bigg(\prod_{i=j}^N\tilde\Lambda_p(\vec
 \yvec_i,\vec\zvec_i;\vec\yvec_{i+1},\vec\zvec_{i+1})\bigg),
\end{align}
where an empty product is regarded as 1.
\end{lemma}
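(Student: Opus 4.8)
The plan is to read off these three inequalities directly from the probabilistic construction of $\pi_p^{\sss(N)}$ given in \cite[Section~3.1]{cs08} by the standard diagrammatic bounding scheme of the lace expansion, so that Lemma~\ref{lmm:diag} becomes the instance of the general estimate \cite[Lemma~1]{s??} that applies to our model. Recall that $\pi_p^{\sss(N)}(\xvec)$ is the $\mP_p$-probability of an explicit nested connection event in which $\ovec$ is joined to $\xvec$ through $N+1$ successive stages: within each stage there are two disjointly realised time-oriented connections, and consecutive stages are glued along vertices at which a forced (pivotal) bond sits. The argument is an induction on $N$ in which one isolates one stage at a time and applies the van den Berg--Kesten (BK) inequality to the disjointly occurring connection events that appear.

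For $N=0$ the event underlying $\pi_p^{\sss(0)}(\xvec)-\delta_{\xvec,\ovec}$ is that $\ovec\ne\xvec$ are joined by two bond-disjoint time-oriented paths; each such path begins with an occupied bond out of $\ovec$ followed by a connection, so its probability is $\psi_p=q_p*\varphi_p$, and the BK inequality yields $\pi_p^{\sss(0)}(\xvec)-\delta_{\xvec,\ovec}\le\psi_p(\xvec)^2$, the lower bound being immediate; this is \refeq{pi0-dec}. For $N\ge1$ one peels off the first stage: two connections emanate from $\ovec$, contributing a factor $\varphi_p(\yvec_1)\,\varphi_p(\zvec_1)$ and, once the closing connection of that stage is exhibited, a further symmetrized two-point factor; conditionally on the gluing vertices $\vec\yvec_1,\vec\zvec_1$ the remainder of the event is a nested event of the same type with one fewer stage, so the induction proceeds. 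At every stage the BK inequality replaces each between-stage connection emanating through the forced bond by $\psi_p$ and each within-stage closing connection by the symmetrized factor $\tfrac{\varphi_p(\cdot)+\varphi_p(-\cdot)}{2^{\delta_{\cdot,\cdot}}}$ occurring inside $\Lambda_p$ and $\tilde\Lambda_p$ --- this symmetrization merely records that the time-order of $\vec\yvec_i$ and $\vec\zvec_i$ is not prescribed and equals $1$ when they coincide. Collecting the factors gives \refeq{piN-dec1}. The bound \refeq{piN-dec2} is the same diagrammatic sum regrouped so that one chosen intermediate level $j$ is written with the factors $\psi_p(\yvec_j)\,\psi_p(\zvec_j)$ displayed explicitly, the stages before it kept in the $\Lambda_p$-form and those after it in the $\tilde\Lambda_p$-form, the extra factor $\varphi_p(\yvec_1-\zvec_1)$ being the closing connection of the first stage; the freedom in the choice of $j$ is what will later permit placing the spatial weight $|x_1|^r$ on a conveniently located two-point function. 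The side condition $t_{\yvec_1}\ge t_{\zvec_1}$ in \refeq{piN-dec1} is the customary symmetry-breaking device that renders the first decomposition unambiguous.

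The one genuinely delicate point I anticipate is verifying that, at each step of the induction, the connection events being separated really do occur on disjoint sets of bonds, so that the BK inequality is legitimate; this is exactly the verification carried out once and for all in \cite[Lemma~1]{s??}, whose hypotheses our $\pi_p^{\sss(N)}$ meets by the construction in \cite[Section~3.1]{cs08}. A secondary point of care is the correct placement of the Kronecker-delta exponents $\delta_{\vec\yvec_i,\vec\zvec_i}$ so that the degenerate case $\vec\yvec_i=\vec\zvec_i$ is not over-counted, which is a direct check against the definitions of $\Lambda_p$ and $\tilde\Lambda_p$. Granting these, \refeq{pi0-dec}--\refeq{piN-dec2} follow as above.
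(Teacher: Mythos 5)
Your proposal matches the paper's own treatment: the paper gives no independent proof of this lemma but imports it wholesale as Lemma~1 of \cite{s??} (the lemma is even titled ``Equivalent to Lemma~1''), exactly as you do, and your BK-inequality/induction sketch is a faithful account of how that lemma is proved in the cited manuscript, with the genuinely delicate disjointness verification correctly identified and deferred there. One minor imprecision: \refeq{piN-dec2} is not literally a regrouping of the sum in \refeq{piN-dec1} but a separate diagrammatic bound coming from a different decomposition of the underlying nested event (note the extra factor $\varphi_p(\yvec_1-\zvec_1)$ and the absence of the constraint $t_{\yvec_1}\ge t_{\zvec_1}$), though since both bounds are quoted from \cite{s??} this does not affect correctness.
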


For further notational convenience, we let
\begin{align*}
\varphi_p^{\sss(m)}(\xvec)&=\varphi_p(\xvec)\,m^{t_{\xvec}},\\
\psi_p^{\sss(m)}(\xvec)&=\psi_p(\xvec)\,m^{t_{\xvec}},\\
\Lambda_p^{\sss(m)}(\vec\yvec_{i-1},\vec\zvec_{i-1};\vec\yvec_i,\vec\zvec_i)
 &=\Lambda_p(\vec\yvec_{i-1},\vec\zvec_{i-1};\vec\yvec_i,\vec\zvec_i)\,m^{t_{
 \zvec_i}},\\
\tilde\Lambda_p^{\sss(m)}(\vec\yvec_i,\vec\zvec_i;\vec\yvec_{i+1},\vec\zvec_{i
 +1})&=\tilde\Lambda_p(\vec\yvec_i,\vec\zvec_i;\vec\yvec_{i+1},\vec\zvec_{i+1})
 \,m^{t_{\zvec_{i+1}}}.
\end{align*}
Given arbitrary $\delta_1,\delta_2>0$, we define $T_{p,m}$, $T'_{p,m}\equiv
T'_{p,m}(\delta_1)$, $W'_{p,m}\equiv W'_{p,m}(\delta_2)$ and $W''_{p,m}\equiv
W''_{p,m}(\delta_1,\delta_2)$ as
\begin{align}
T_{p,m}&=\sup_{\xvec}\sum_{\yvec}\psi_p(\yvec)\bigg((\psi_p^{\sss(m)}*\varphi_p
 )(\yvec-\xvec)+\sum_{\zvec}\varphi_p(\zvec-\yvec)\,\psi_p^{\sss(m)}(\zvec-\xvec)
 \bigg),\lbeq{T-def}\\
T'_{p,m}&=\sup_{\xvec}\sum_{\yvec}|y_1|^{\delta_1}\psi_p(\yvec)\bigg((\psi_p^{
 \sss(m)}*\varphi_p)(\yvec-\xvec)+\sum_{\zvec}\varphi_p(\zvec-\yvec)\,\psi_p^{\sss(m)}(\zvec-\xvec)
 \bigg),\lbeq{T'-def}\\
W'_{p,m}&=\sup_{\xvec}\sum_{\yvec}\psi_p(\yvec)\,|y_1-x_1|^{\delta_2}\psi_p^{
 \sss(m)}(\yvec-\xvec),\lbeq{W'-def}\\
W''_{p,m}&=\sup_{\xvec}\sum_{\yvec}|y_1|^{\delta_1}\psi_p(\yvec)\,|y_1-x_1|^{
 \delta_2}\psi_p^{\sss(m)}(\yvec-\xvec).\lbeq{W''-def}
\end{align}
\begin{figure}[t]
\begin{align*}
T_{p,m}&=~~\raisebox{-2pc}{\includegraphics[scale=0.15]{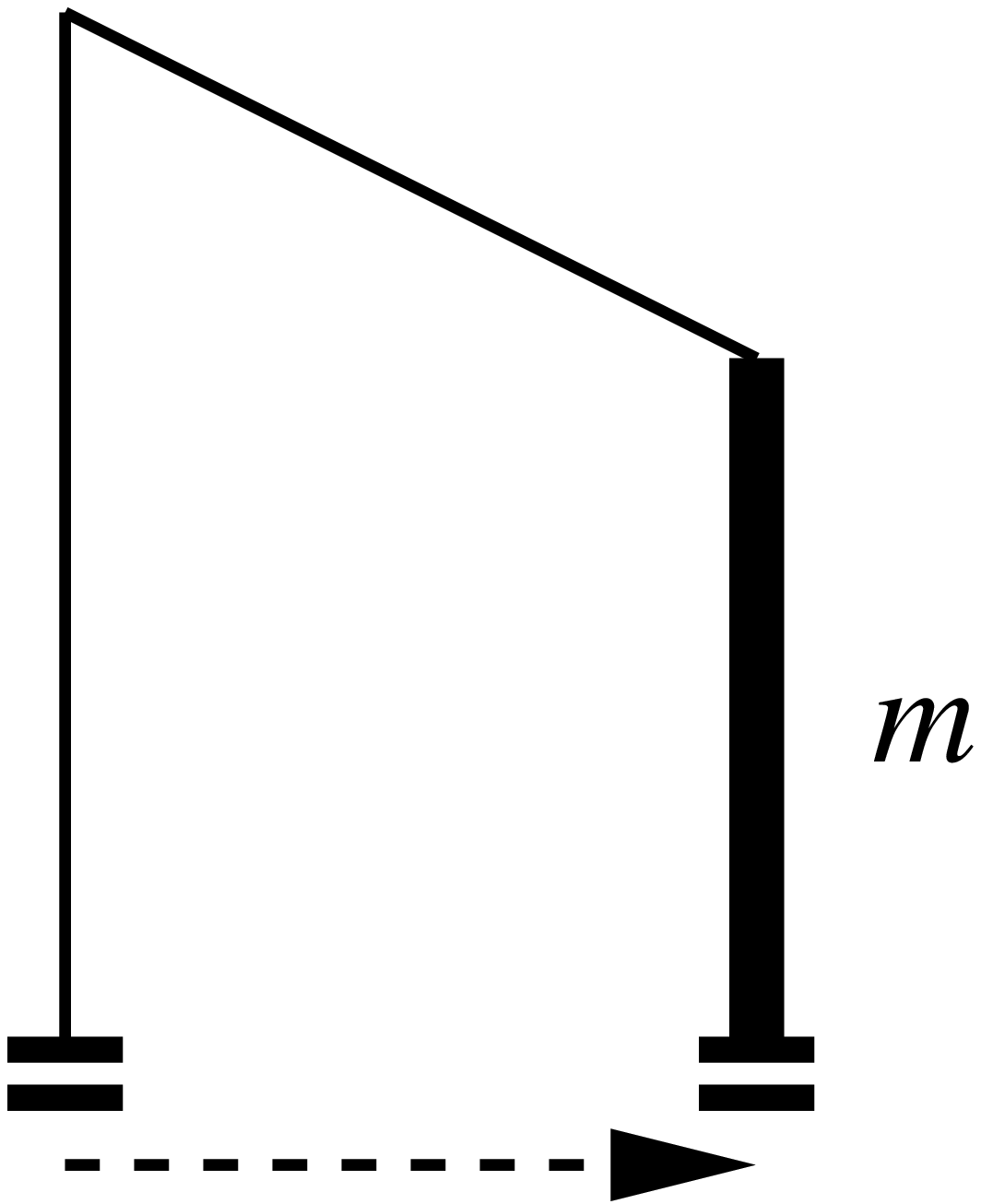}}~~+~~
 \raisebox{-2pc}{\includegraphics[scale=0.15]{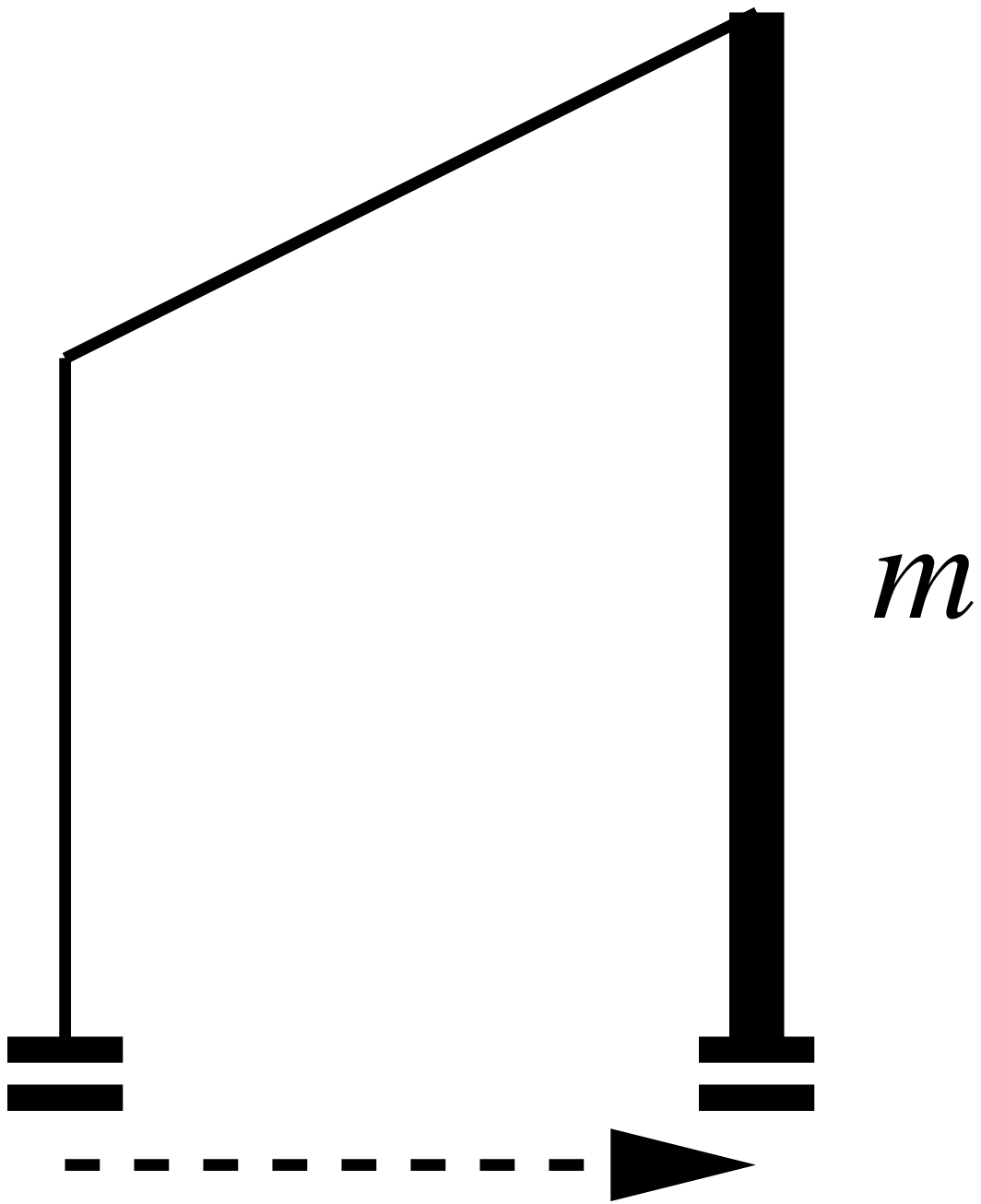}}&
W'_{p,m}&=~~\raisebox{-2pc}{\includegraphics[scale=0.15]{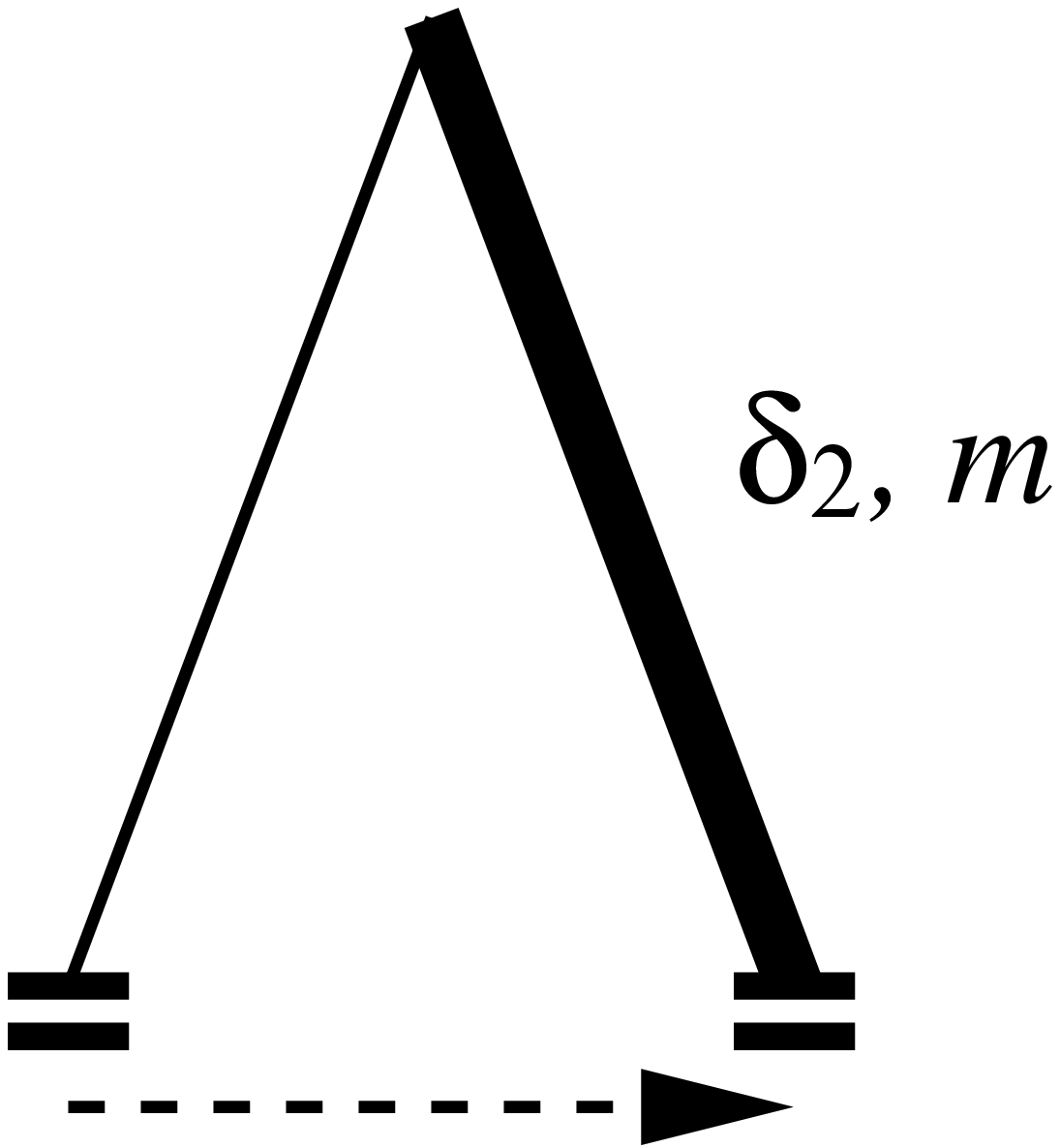}}\\[1pc]
T'_{p,m}&=~~\raisebox{-2pc}{\includegraphics[scale=0.15]{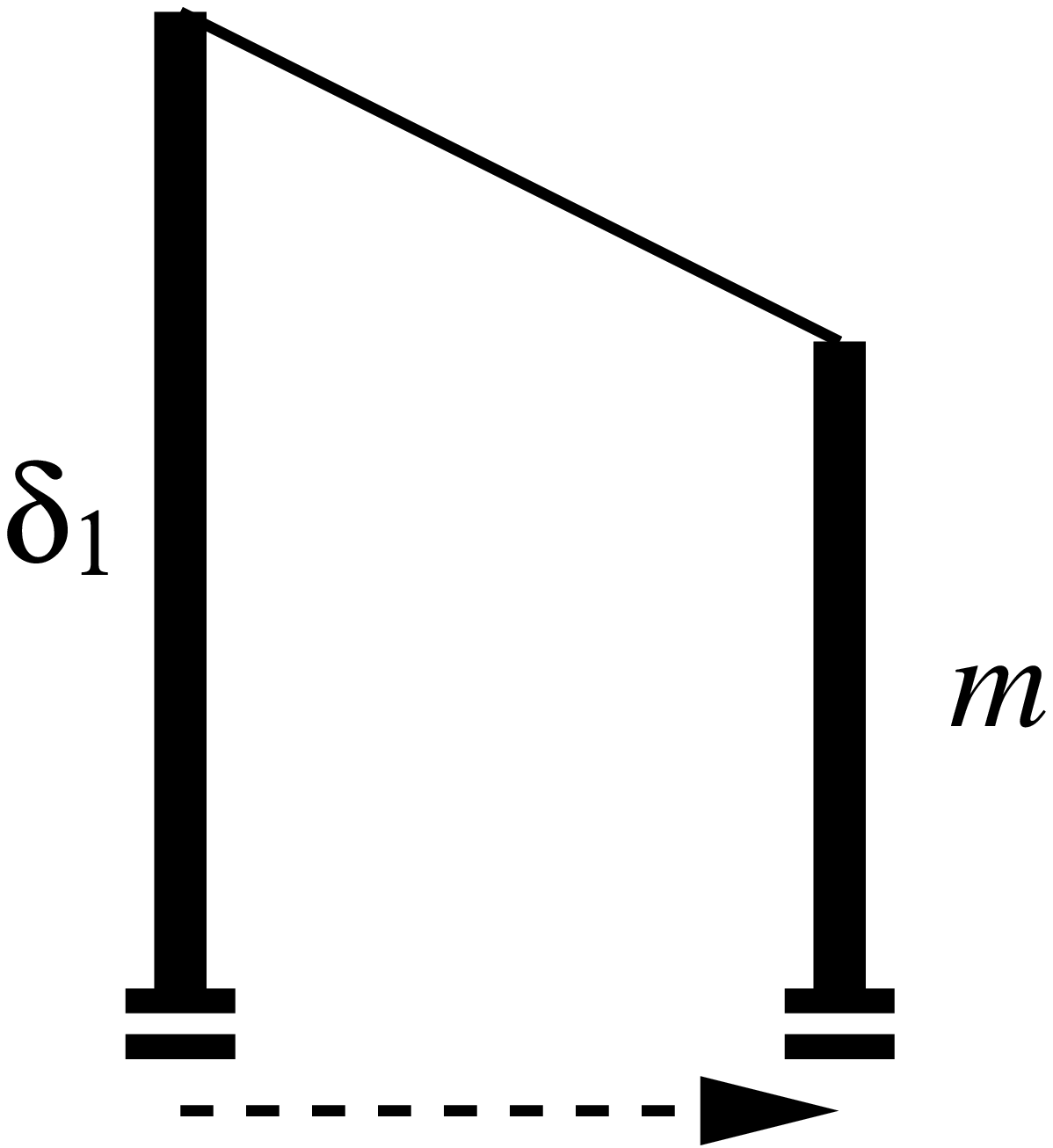}}~~+~~
 \raisebox{-2pc}{\includegraphics[scale=0.15]{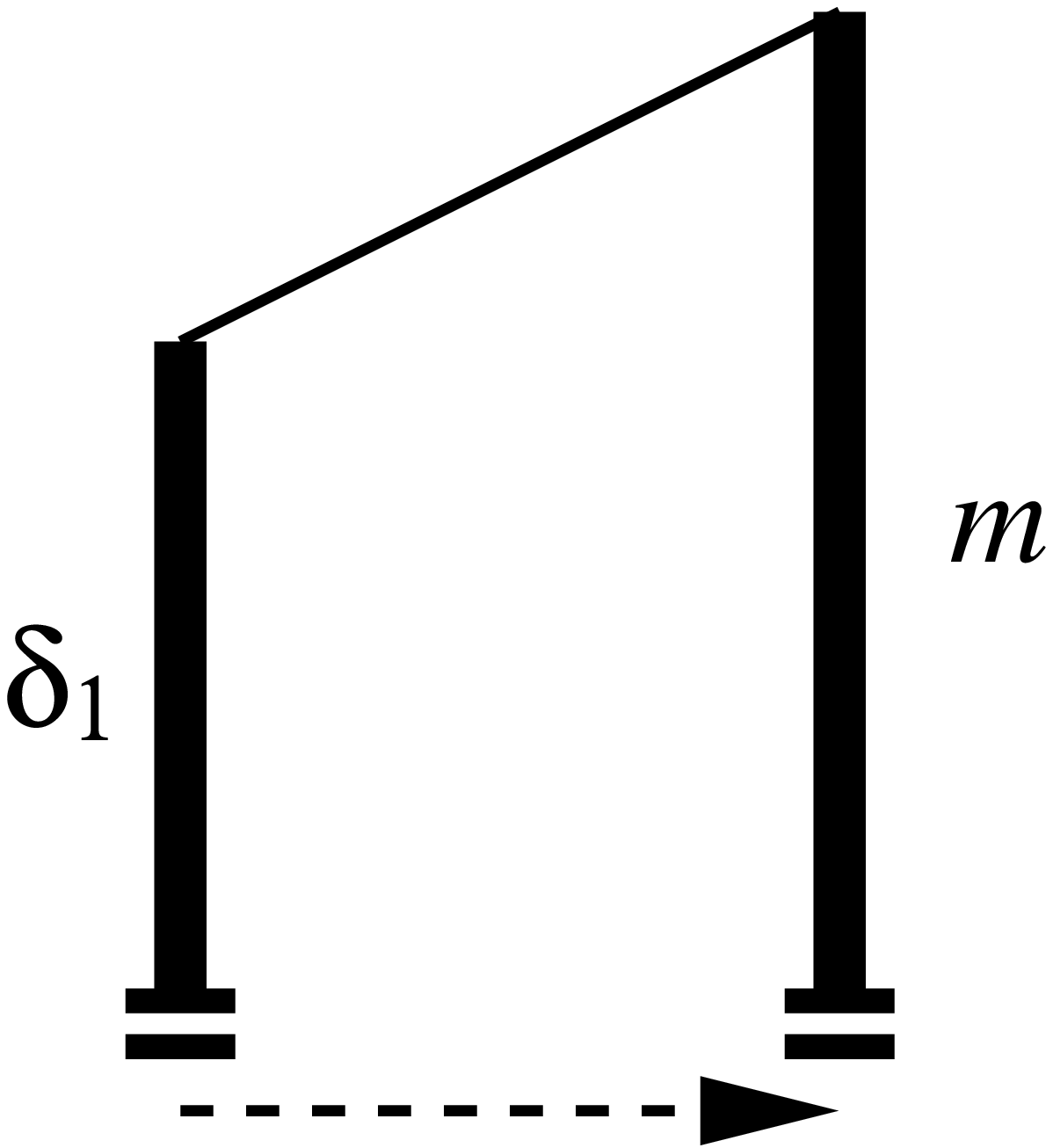}}&\qquad
W''_{p,m}&=~~\raisebox{-2pc}{\includegraphics[scale=0.15]{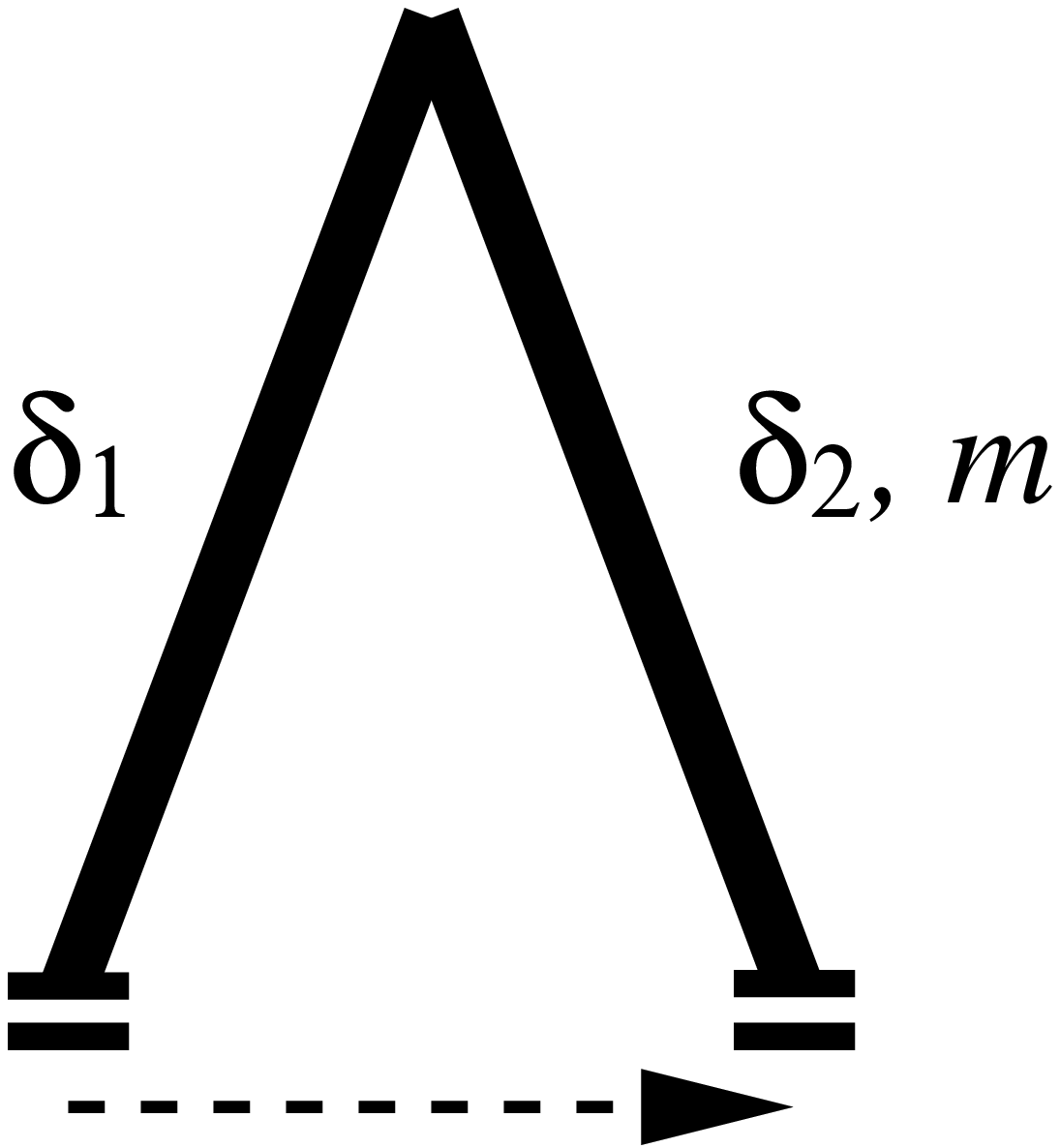}}
\end{align*}
\caption{Schematic representations of the diagram functions.  Each pair of
horizontal short line segments represents $q_p$, and the other longer line
segments represent $\varphi_p$.  A bold line segment representing
$\varphi_p(x,n)$ is weighted by the factor $m^n$ if the line segment is indexed
by $m$, and by the factor $|x_1|^\delta$ if the line segment is indexed by
$\delta$.  A dashed arrow represents the supremum over its terminal point
$\xvec\in\mZ^{d+1}$, with its initial point fixed at the origin $\ovec$.}
\end{figure}
Using the above diagram functions and Lemma~\ref{lmm:diag}, we obtain the
following:

\begin{lemma}\label{lmm:|x1|piN-bd}
For any $N\ge0$ and $m\ge0$,
\begin{align}\lbeq{|x1|piN-bd}
\sum_{(x,n)}|x_1|^{\delta_1+\delta_2}\pi_p^{\sss(N)}(x,n)\,m^n&\le(N+1)^{
 \delta_1+\delta_2}(T_{p,m})^{N-2}\bigg(\Big(N(1+T_{p,m})+T_{p,m}\Big)T_{p,
 m}W''_{p,m}\nn\\
&\qquad+N\Big((N-1)(1+T_{p,m})+3T_{p,m}\Big)T'_{p,m}W'_{p,m}\bigg).
\end{align}
\end{lemma}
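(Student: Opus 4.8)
The plan is to start from the diagrammatic decompositions in Lemma~\ref{lmm:diag} and distribute the weight $|x_1|^{\delta_1+\delta_2}$ across the two chains of two-point functions running from $\ovec$ to $\xvec$. First I would record the elementary inequality $|x_1|^{\delta_1+\delta_2}\le 2^{\delta_1+\delta_2}(|a_1|^{\delta_1+\delta_2}+|b_1|^{\delta_1+\delta_2})$ whenever $x=a+b$, and, more usefully, the telescoping bound: if $x_1$ is written as a sum of $N+1$ increments along one chain, say $x_1=\sum_{i=1}^{N+1}u_i$, then $|x_1|^{\delta_1+\delta_2}\le (N+1)^{\delta_1+\delta_2}\sum_i|u_i|^{\delta_1+\delta_2}$ by convexity (this is where the prefactor $(N+1)^{\delta_1+\delta_2}$ comes from). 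Actually, since we have \emph{two} chains available—the $\yvec$-chain and the $\zvec$-chain—I would instead split $\delta_1+\delta_2$ as $\delta_1$ on one chain and $\delta_2$ on the other, using $|x_1|^{\delta_1+\delta_2}=|x_1|^{\delta_1}|x_1|^{\delta_2}$, then telescope each factor along its own chain. The point of keeping $\delta_1,\delta_2$ separate rather than lumping them is exactly the one flagged in the text: $|y|^{\delta_i}\varphi_p(y,s)$ is controllable only when $\delta_i<\alpha\wedge2$, so we never want a single factor of weight $\alpha\wedge2+\delta$ sitting on one two-point function.

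Next I would feed the weighted decomposition into the definitions \refeq{T-def}--\refeq{W''-def}. After telescoping, each term in the sum over $\yvec_1,\dots,\yvec_{N+1},\zvec_1,\dots,\zvec_{N+1}$ carries at most one factor $|{\cdot}|^{\delta_1}$ on some segment of one chain and at most one factor $|{\cdot}|^{\delta_2}$ on some segment (possibly of the other chain). I would then bound the resulting sum by a product of ``admissible'' diagram factors: each unweighted rung of the ladder contributes a factor $T_{p,m}$ (via \refeq{T-def}, pulling off one $\psi_p$–$\varphi_p$ triangle at a time by $\sup_{\xvec}$), and the two special rungs carrying the weights contribute either a $T'_{p,m}$ together with a $W'_{p,m}$ (when the two weights land on different chains, so one weighted triangle $T'_{p,m}(\delta_1)$ and one weighted bubble $W'_{p,m}(\delta_2)$ appear) or a single $T_{p,m}\,W''_{p,m}$ (when both weights land on the same bubble, giving $W''_{p,m}(\delta_1,\delta_2)$, with one spare plain triangle $T_{p,m}$). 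The book-keeping of \emph{which} of the $N+1$ rungs the weighted increment can sit on, and of the two sub-cases (same chain versus different chains), produces the combinatorial coefficients: roughly $N(1+T_{p,m})+T_{p,m}$ placements for the $W''$-type term and $N\big((N-1)(1+T_{p,m})+3T_{p,m}\big)$ for the $T'W'$-type term, with $N-2$ leftover plain rungs giving $(T_{p,m})^{N-2}$. One has to treat $N=0$ and $N=1$ (and the boundary cases in \refeq{pi0-dec} and the $j$-dependent bound \refeq{piN-dec2}) separately, checking that the stated formula still dominates after interpreting $(T_{p,m})^{N-2}$ and the empty products appropriately; these small-$N$ cases are where a direct check against \refeq{pi0-dec}--\refeq{piN-dec2} is needed.

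The main obstacle I expect is the combinatorics of the placement of the two weighted segments together with the case analysis forced by the asymmetric decomposition \refeq{piN-dec2} (which depends on the choice of $j\in\{2,\dots,N+1\}$) versus the symmetric one \refeq{piN-dec1}. Getting the two decompositions to cooperate so that \emph{both} weights always end up on genuinely ``bubble-like'' or ``triangle-like'' sub-diagrams—rather than, say, on a bare $\varphi_p(\yvec_1)$ with no companion line to close a bubble—requires choosing $j$ appropriately relative to where the weighted increment falls, and then matching the resulting sub-diagram to one of $T'_{p,m}$, $W'_{p,m}$, $W''_{p,m}$. Once the placement is organized, the actual estimates are routine: each factorization step is a Cauchy–Schwarz-free ``pull off one rung via $\sup_{\xvec}$'' argument exactly as in the standard derivation of triangle bounds, and the weighted triangle/bubble factors are simply not expanded further—their finiteness is deferred to Section~\ref{ss:bounds}. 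So the lemma reduces to a careful but mechanical accounting, and I would present it as such, emphasizing the telescoping step and the weight-splitting as the two ideas, with the constants absorbed into the displayed polynomial in $N$ and $T_{p,m}$.
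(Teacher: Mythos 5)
Your proposal follows essentially the same route as the paper's proof: split the weight as $|x_1|^{\delta_1}|x_1|^{\delta_2}$, telescope $\delta_1$ along the $\yvec$-chain and $\delta_2$ along the $\zvec$-chain (yielding the $(N+1)^{\delta_1+\delta_2}$ prefactor), select the decomposition \refeq{piN-dec1} or \refeq{piN-dec2} with the index $j'$ matched to where the $\delta_2$-weight falls so it sits on a $\psi_p$-segment, and then peel off rungs by $\sup$ and translation-invariance to obtain $(T_{p,m})^{N-2}$ times the weighted factors $T'_{p,m}W'_{p,m}$ or $T_{p,m}W''_{p,m}$ according to whether the two weights land on different rungs or the same one. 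This is exactly the paper's argument, with only the routine bookkeeping of the coefficients and the $N=0$ case left to be written out.
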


\begin{proof}
First of all, by \refeq{pi0-dec}, we immediately obtain
\begin{align*}
\sum_{(x,n)}|x_1|^{\delta_1+\delta_2}\pi_p^{\sss(0)}(x,n)\,m^n\le\sum_{(x,
 n)}|x_1|^{\delta_1}\psi_p(x,n)\,|x_1|^{\delta_2}\psi_p^{\sss(m)}(x,n)\le
 W''_{p,m},
\end{align*}
as required.

Let $N\ge1$.  We denote the first coordinate of the spatial part of $\yvec_i$
by $y_{i,1}$: $\yvec_i=((y_{i,1},\dots,y_{i,d}),t_{\yvec_i})$. Similarly, we
write, e.g., $\vec\yvec_i=((\vec y_{i,1},\dots,\vec y_{i,d}),t_{\vec\yvec_i})$.
Notice that, since
\begin{align*}
|\vec y_{\sss N+1,1}|^{\delta_1}=\bigg|\sum_{j=1}^{N+1}y_{j,1}\bigg|^{\delta_1}
 \le(N+1)^{\delta_1}\max_j|y_{j,1}|^{\delta_1}\le(N+1)^{\delta_1}\sum_{j=1}^{N
 +1}|y_{j,1}|^{\delta_1},
\end{align*}
we have that, for $\vec\yvec_{N+1}=\vec\zvec_{N+1}=\xvec$,
\begin{align*}
|x_1|^{\delta_1+\delta_2}=|\vec y_{\sss N+1,1}|^{\delta_1}|\vec z_{\sss N+1,1}|
 ^{\delta_2}\le(N+1)^{\delta_1+\delta_2}\sum_{j,j'=1}^{N+1}|y_{j,1}|^{\delta_1}
 |z_{j',1}|^{\delta_2}.
\end{align*}
By this inequality and using \refeq{piN-dec1}--\refeq{piN-dec2}, we obtain
\begin{align}\lbeq{|x1|piN-bd1}
\sum_{(x,n)}|x_1|^{\delta_1+\delta_2}\pi_p^{\sss(N)}(x,n)\,m^n\le
 (N+1)^{\delta_1+\delta_2}\sum_{j'=1}^{N+1}S_{j'},
\end{align}
where
\begin{align*}
S_1=\sum_{j=1}^{N+1}\sum_{\substack{\yvec_1,\dots,\yvec_{N+1}\\ \zvec_1,
 \dots,\zvec_{N+1}\\ (\vec\yvec_{N+1}=\vec\zvec_{N+1})\\ (t_{\yvec_1}\ge
 t_{\zvec_1})}}&|y_{j,1}|^{\delta_1}\varphi_p(\yvec_1)\,|z_{1,1}|^{\delta_2}
 \varphi_p^{\sss(m)}(\zvec_1)\prod_{i=1}^N\tilde\Lambda_p^{\sss(m)}(\vec
 \yvec_i,\vec\zvec_i;\vec\yvec_{i+1},\vec\zvec_{i+1}),
\end{align*}
and, for $j'>1$,
\begin{align*}
S_{j'}=\sum_{j=1}^{N+1}\sum_{\substack{\yvec_1,\dots,\yvec_{N+1}\\ \zvec_1,
 \dots,\zvec_{N+1}\\ (\vec\yvec_{N+1}=\vec\zvec_{N+1})}}&|y_{j,1}|^{\delta_1}
 \varphi_p(\yvec_1)\,\varphi_p^{\sss(m)}(\zvec_1)\,\varphi_p(\yvec_1-\zvec_1)
 \bigg(\prod_{i=2}^{j'-1}\Lambda_p^{\sss(m)}(\vec\yvec_{i-1},\vec\zvec_{i-1};
 \vec\yvec_i,\vec\zvec_i)\bigg)\\
&\times\psi_p(\yvec_{j'})\,|z_{j',1}|^{\delta_2}\psi_p^{\sss(m)}(\zvec_{j'})
 \bigg(\prod_{i=j'}^N\tilde\Lambda_p^{\sss(m)}(\vec\yvec_i,\vec\zvec_i;\vec
 \yvec_{i+1},\vec\zvec_{i+1})\bigg).
\end{align*}

It remains to estimate each $S_{j'}$.  To do so, we follow the same line of
argument in \cite[Section~2]{s??}.  Here, we explain in detail how to estimate
$S_1$.  First we note that, by translation-invariance,
\begin{align}
\sup_{\yvec}\sum_{\wvec,\xvec}\tilde\Lambda_p^{\sss(m)}(\ovec,\wvec;\xvec,
 \xvec+\yvec)&\le T_{p,m},\lbeq{tLambda-bd1}\\
\sup_{\yvec}\sum_{\wvec,\xvec}|y_1|^{\delta_1}\tilde\Lambda_p^{\sss(m)}(\ovec,
 \wvec;\xvec,\xvec+\yvec)&\le T'_{p,m}.\lbeq{tLambda-bd2}
\end{align}
Then, by repeated use of translation-invariance, the contribution to $S_1$ from
$j=1$ is bounded as
\begin{align}\lbeq{S11-bd}
&\sum_{\substack{\yvec_1,\dots,\yvec_{N+1}\\ \zvec_1,\dots,\zvec_{N+1}\\
 (\vec\yvec_{N+1}=\vec\zvec_{N+1})\\ (t_{\yvec_1}\ge t_{\zvec_1})}}|y_{1,
 1}|^{\delta_1}\varphi_p(\yvec_1)\,|z_{1,1}|^{\delta_2}\varphi_p^{\sss(m)}
 (\zvec_1)\prod_{i=1}^N\tilde\Lambda_p^{\sss(m)}(\vec\yvec_i,\vec\zvec_i;
 \vec\yvec_{i+1},\vec\zvec_{i+1})\nn\\
&=\sum_{\wvec,\xvec}\tilde\Lambda_p^{\sss(m)}(\ovec,\wvec;\xvec,\xvec)
 \sum_{\substack{\yvec_1,\dots,\yvec_N\\ \zvec_1,\dots,\zvec_N\\ (\vec
 \zvec_N=\vec\yvec_N+\wvec)\\ (t_{\yvec_1}\ge t_{\zvec_1})}}|y_{1,
 1}|^{\delta_1}\varphi_p(\yvec_1)\,|z_{1,1}|^{\delta_2}\varphi_p^{\sss(m)}
 (\zvec_1)\prod_{i=1}^{N-1}\tilde\Lambda_p^{\sss(m)}(\vec\yvec_i,\vec\zvec_i;
 \vec\yvec_{i+1},\vec\zvec_{i+1})\nn\\
&\le T_{p,m}~\sup_{\wvec}\sum_{\substack{\yvec_1,\dots,\yvec_N\\ \zvec_1,
 \dots,\zvec_N\\ (\vec\zvec_N=\vec\yvec_N+\wvec)\\ (t_{\yvec_1}\ge
 t_{\zvec_1})}}|y_{1,1}|^{\delta_1}\varphi_p(\yvec_1)\,|z_{1,1}|^{\delta_2}
 \varphi_p^{\sss(m)}(\zvec_1)\prod_{i=1}^{N-1}\tilde\Lambda_p^{\sss(m)}(\vec
 \yvec_i,\vec\zvec_i;\vec\yvec_{i+1},\vec\zvec_{i+1})\nn\\
&\le(T_{p,m})^2~\sup_{\wvec}\sum_{\substack{\yvec_1,\dots,\yvec_{N-1}\\
 \zvec_1,\dots,\zvec_{N-1}\\ (\vec\zvec_{N-1}=\vec\yvec_{N-1}+\wvec)\\ (t_{\yvec_1}\ge t_{\zvec_1})}}|y_{1,1}|^{\delta_1}\varphi_p(\yvec_1)\,
 |z_{1,1}|^{\delta_2}\varphi_p^{\sss(m)}(\zvec_1)\prod_{i=1}^{N-2}\tilde
 \Lambda_p^{\sss(m)}(\vec\yvec_i,\vec\zvec_i;\vec\yvec_{i+1},\vec\zvec_{i
 +1})\nn\\
&~\:\vdots\nn\\
&\le(T_{p,m})^N~\sup_{\wvec:t_{\wvec}\ge0}\sum_{\substack{\yvec_1,\zvec_1\\
 (\yvec_1=\zvec_1+\wvec)}}|y_{1,1}|^{\delta_1}\varphi_p(\yvec_1)\,|z_{1,
 1}|^{\delta_2}\varphi_p^{\sss(m)}(\zvec_1)\nn\\
&\le(T_{p,m})^NW''_{p,m},
\end{align}
where we have used $\varphi_p(\xvec)\le\delta_{\xvec,\ovec}+\psi_p(\xvec)$. Similarly, the contribution to $S_1$ from $j>1$ is bounded as
\begin{align*}
&\sum_{\substack{\yvec_1,\dots,\yvec_{N+1}\\ \zvec_1,\dots,\zvec_{N+1}\\
 (\vec\yvec_{N+1}=\vec\zvec_{N+1})\\ (t_{\yvec_1}\ge t_{\zvec_1})}}|y_{j,
 1}|^{\delta_1}\varphi_p(\yvec_1)\,|z_{1,1}|^{\delta_2}\varphi_p^{\sss(m)}
 (\zvec_1)\prod_{i=1}^N\tilde\Lambda_p^{\sss(m)}(\vec\yvec_i,\vec\zvec_i;
 \vec\yvec_{i+1},\vec\zvec_{i+1})\\
&\le(T_{p,m})^{N-1}T'_{p,m}~\sup_{\wvec:t_{\wvec\ge0}}\sum_{\substack{\yvec_1,
 \zvec_1\\ (\yvec_1=\zvec_1+\wvec)}}\varphi_p(\yvec_1)\,|z_{1,1}|^{\delta_2}
 \varphi_p^{\sss(m)}(\zvec_1)\\
&\le(T_{p,m})^{N-1}T'_{p,m}W'_{p,m}.
\end{align*}
Therefore,
\begin{align}\lbeq{S1-bd}
S_1\le N(T_{p,m})^{N-1}T'_{p,m}W'_{p,m}+(T_{p,m})^NW''_{p,m}.
\end{align}

To estimate $S_{j'}$ for $j'>1$, we first use
\refeq{tLambda-bd1}--\refeq{tLambda-bd2}.  For example, the contribution from
$j=j'$ is bounded, similarly to \refeq{S11-bd}, as
\begin{align}\lbeq{Sjj-bd}
&\sum_{\substack{\yvec_1,\dots,\yvec_{N+1}\\ \zvec_1,\dots,\zvec_{N+1}\\ (\vec
 \yvec_{N+1}=\vec\zvec_{N+1})}}\varphi_p(\yvec_1)\,\varphi_p^{\sss(m)}(\zvec_1)
 \,\varphi_p(\yvec_1-\zvec_1)\bigg(\prod_{i=2}^{j'-1}\Lambda_p^{\sss(m)}(\vec
 \yvec_{i-1},\vec\zvec_{i-1};\vec\yvec_i,\vec\zvec_i)\bigg)\nn\\
&\hskip5pc\times|y_{j',1}|^{\delta_1}\psi_p(\yvec_{j'})\,|z_{j',1}|^{\delta_2}
 \psi_p^{\sss(m)}(\zvec_{j'})\bigg(\prod_{i=j'}^N\tilde\Lambda_p^{\sss(m)}(\vec
 \yvec_i,\vec\zvec_i;\vec\yvec_{i+1},\vec\zvec_{i+1})\bigg)\nn\\
&\le(T_{p,m})^{N+1-j'}~\sup_{\xvec}\sum_{\substack{\yvec_1,\dots,\yvec_{j
 '}\\ \zvec_1,\dots,\zvec_{j'}\\ (\vec\zvec_{j'}=\vec\yvec_{j'}+\xvec)}}
 \varphi_p(\yvec_1)\,\varphi_p^{\sss(m)}(\zvec_1)\,\varphi_p(\yvec_1-\zvec_1)
 \nn\\
&\hskip5pc\times\bigg(\prod_{i=2}^{j'-1}\Lambda_p^{\sss(m)}(\vec\yvec_{i-1},
 \vec\zvec_{i-1};\vec\yvec_i,\vec\zvec_i)\bigg)|y_{j',1}|^{\delta_1}\psi_p
 (\yvec_{j'})\,|z_{j',1}|^{\delta_2}\psi_p^{\sss(m)}(\zvec_{j'}).\nn\\
&\le(T_{p,m})^{N+1-j'}W''_{p,m}~\sup_{\xvec}\sum_{\substack{\yvec_1,
 \dots,\yvec_{j'-1}\\ \zvec_1,\dots,\zvec_{j'-1}\\ (\vec\zvec_{j'-1}=\vec
 \yvec_{j'-1}+\xvec)}}\varphi_p(\yvec_1)\,\varphi_p^{\sss(m)}(\zvec_1)\,
 \varphi_p(\yvec_1-\zvec_1)\nn\\
&\hskip5pc\times\bigg(\prod_{i=2}^{j'-1}\Lambda_p^{\sss(m)}(\vec\yvec_{i-1},
 \vec\zvec_{i-1};\vec\yvec_i,\vec\zvec_i)\bigg).
\end{align}
Notice that
\begin{align*}
\sup_{\xvec}\sum_{\yvec,\zvec}\Lambda_p^{\sss(m)}(\ovec,\xvec;\yvec,
 \zvec)\le T_{p,m},&&&&
\sup_{\xvec}\sum_{\yvec,\zvec}|y_1|^{\delta_1}\Lambda_p^{\sss(m)}(\ovec,
 \xvec;\yvec,\zvec)\le T'_{p,m}.
\end{align*}
By repeated use of translation-invariance, we obtain
\begin{align*}
\refeq{Sjj-bd}\le(1+T_{p,m})(T_{p,m})^{N-1}W''_{p,m}.
\end{align*}
It is not hard to see that the contribution from $j$ not being either $j'$ or
1, which is possible only if $N\ge2$, is bounded by
$(1+T_{p,m})(T_{p,m})^{N-2}T'_{p,m}W'_{p,m}$, and the contribution from $j=1$
is bounded by $2(T_{p,m})^{N-1}T'_{p,m}W'_{p,m}$. Therefore, for $j'>1$,
\begin{align}\lbeq{Sj-bd}
S_{j'}&\le\big((N-1)(1+T_{p,m})+2T_{p,m}\big)(T_{p,m})^{N-2}T'_{p,m}
 W'_{p,m}+(1+T_{p,m})(T_{p,m})^{N-1}W''_{p,m}.
\end{align}

The proof of \refeq{|x1|piN-bd} is completed by assembling
\refeq{|x1|piN-bd1}, \refeq{S1-bd} and \refeq{Sj-bd}.
\end{proof}

\subsection{Integral representation of fractional-power functions}\label{ss:prelim}
In this subsection, we use an integral representation of $a^\delta$ for $a>0$
and $\delta\in(0,2)$ to bound the diagram functions $T'_{p,m}$, $W'_{p,m}$ and
$W''_{p,m}$.

First we note that, for $\delta\in(0,2)$,
\begin{align*}
K_\delta=\int_0^\infty\frac{1-\cos t}{t^{1+\delta}}\;\textrm{d}t
\end{align*}
is a positive finite constant.  Replacing $t$ by $u=t/a$ with $a>0$, we obtain
\begin{align}\lbeq{anyr}
a^\delta=\frac1{K_\delta}\int_0^\infty\frac{1-\cos(ua)}{u^{1+\delta}}\;
 \textrm{d}u\le\frac1{K_\delta}\bigg(\frac2\delta+\int_0^1\frac{1-\cos(ua)}
 {u^{1+\delta}}\;\textrm{d}u\bigg),
\end{align}
which is the key inequality.

To describe bounds on $T'_{p,m}$, $W'_{p,m}$ and $W''_{p,m}$ below, we define
\begin{align*}
\hat Y_k(l,z)=|\Delta_k\hat D(l)|\,|\hat\varphi_p(l,z)|+|\Delta_k\hat\varphi_p
 (l,z)|,
\end{align*}
and, by denoting $\vec u=(u,0,\dots,0)\in[-\pi,\pi]^d$,
\begin{align}
\hat I_1(u)&=\int_{[-\pi,\pi]^d}\frac{{\rm d}^dl}{(2\pi)^d}\int_{-\pi}^\pi
 \frac{{\rm d}\theta}{2\pi}\;\hat Y_{\vec u}(l,e^{i\theta})\,|\hat\varphi_p
 (l,e^{i\theta})|\,|\hat\varphi_p(l,me^{i\theta})|,\lbeq{I1-def}\\
\hat I_2(v)&=\int_{[-\pi,\pi]^d}\frac{{\rm d}^dl}{(2\pi)^d}\int_{-\pi}^\pi
 \frac{{\rm d}\theta}{2\pi}\;|\hat\varphi_p(l,e^{i\theta})|\,\hat Y_{\vec v}(l,me^{i\theta}),\lbeq{I2-def}\\
\hat I_3(u)&=\int_{[-\pi,\pi]^d}\frac{{\rm d}^dl}{(2\pi)^d}\int_{-\pi}^\pi
 \frac{{\rm d}\theta}{2\pi}\;\hat Y_{\vec u}(l,e^{i\theta})\,|\hat\varphi_p
 (l,me^{i\theta})|,\lbeq{I3-def}\\
\hat I_4(u,v)&=\int_{[-\pi,\pi]^d}\frac{{\rm d}^dl}{(2\pi)^d}\int_{-
 \pi}^\pi\frac{{\rm d}\theta}{2\pi}\;\hat Y_{\vec u}(l,e^{i\theta})\,\hat Y_{\vec v}(l,me^{i\theta}).\lbeq{I4-def}
\end{align}
Taking the Fourier-Laplace transform of \refeq{T'-def}--\refeq{W''-def} (also
recalling \refeq{DeltaFL}) and using \refeq{anyr}, we obtain the following:

\begin{lemma}\label{lmm:diagfurther}
For any $p\in(0,\pc)$ and $m\in[0,m_p)$,
\begin{align}
T'_{p,m}&\le\frac1{K_{\delta_1}}\bigg(\frac2{\delta_1}T_{p,m}+5p^2m\int_0^1
 \frac{{\rm d}u}{u^{1+\delta_1}}\;\hat I_1(u)\bigg),\lbeq{T'-bd}\\
W'_{p,m}&\le\frac1{K_{\delta_2}}\bigg(\frac1{\delta_2}T_{p,m}+\frac{5p^2m}2
 \int_0^1\frac{{\rm d}v}{v^{1+\delta_2}}\;\hat I_2(v)\bigg),\lbeq{W'-bd}\\
W''_{p,m}&\le\frac1{K_{\delta_1}}\bigg(\frac2{\delta_1}W'_{p,m}+\frac{5p^2m}
 {K_{\delta_2}\delta_2}\int_0^1\frac{{\rm d}u}{u^{1+\delta_1}}\;\hat I_3
 (u)+\frac{25p^2m}{4K_{\delta_2}}\int_0^1\frac{{\rm d}u}{u^{1+
 \delta_1}}\int_0^1\frac{{\rm d}v}{v^{1+\delta_2}}\;\hat I_4(u,v)\bigg).
 \lbeq{W''-bd}
\end{align}
\end{lemma}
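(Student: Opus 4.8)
The plan is to pass to Fourier–Laplace transforms and use the key inequality \refeq{anyr} to remove the spatial weights $|y_1|^{\delta_1}$ and $|y_1-x_1|^{\delta_2}$ from the diagram functions in \refeq{T'-def}--\refeq{W''-def}. First I would apply \refeq{anyr} to the weight carried by each $\psi_p$- or $\psi_p^{\sss(m)}$-line: once for $T'_{p,m}$ and $W'_{p,m}$, and twice for $W''_{p,m}$ (once for $|y_1|^{\delta_1}$, once for $|y_1-x_1|^{\delta_2}$), which is precisely what produces the three-term structure of \refeq{W''-bd}. In each application the additive constant from \refeq{anyr} (of order $\delta_i^{-1}$) reproduces, after taking the supremum, the corresponding \emph{unweighted} diagram — $T_{p,m}$ for $T'_{p,m}$; a ``bubble'' that is $\le T_{p,m}$ via $\varphi_p(\xvec)\ge\delta_{\xvec,\ovec}$ for $W'_{p,m}$; and $W'_{p,m}$ itself for $W''_{p,m}$ — while the integral term replaces the weight, say $|y_1|^{\delta_1}$, by $\int_0^1u^{-1-\delta_1}\big(1-\cos(\vec u\cdot y)\big)\,{\rm d}u$ with $\vec u=(u,0,\dots,0)$. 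It then remains to estimate, for fixed $\vec u$ (and $\vec v$), the diagram with $1-\cos(\vec u\cdot y)$ inserted on a $\psi_p$-line (and $1-\cos(\vec v\cdot(y-x))$ on a $\psi_p^{\sss(m)}$-line).

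For these, I would observe that each remaining diagram is a supremum over $\xvec$ of a space–time cross-correlation $\sum_{\yvec}F(\yvec)\,G(\yvec-\xvec)$ of nonnegative functions, which by Fourier–Laplace inversion is at most $\int_{[-\pi,\pi]^d}\frac{{\rm d}^dl}{(2\pi)^d}\int_{-\pi}^\pi\frac{{\rm d}\theta}{2\pi}\,|\hat F(l,e^{i\theta})|\,|\hat G(l,e^{i\theta})|$, the factor $m^{t}$ carried by a $\psi_p^{\sss(m)}$-line being absorbed by evaluating its $\hat\varphi_p$-factor at $me^{i\theta}$ instead of $e^{i\theta}$. Since $\widehat{\psi_p}=\hat q_p\,\hat\varphi_p$ with $\hat q_p(l,z)=p\hat D(l)z$, the Fourier–Laplace transform of $(1-\cos(\vec u\cdot\cdot))\psi_p$ is $-\tfrac12\Delta_{\vec u}\widehat{\psi_p}(l,z)=-\tfrac{pz}2\Delta_{\vec u}\big(\hat D(l)\hat\varphi_p(l,z)\big)$; a Leibniz identity for the discrete second difference together with $|\hat D(l)|\le1$ bounds its modulus by a fixed multiple (namely $\tfrac52$) of $p|z|\,\hat Y_{\vec u}(l,z)$. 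Unweighted $\psi_p$-lines contribute $|\widehat{\psi_p}(l,z)|\le p|z|\,|\hat\varphi_p(l,z)|$ and unweighted $\varphi_p$-lines contribute $|\hat\varphi_p(l,z)|$. Matching the lines of each diagram against \refeq{I1-def}--\refeq{I4-def} then identifies the resulting integrals as exactly $\hat I_1(u)$, $\hat I_2(v)$, $\hat I_3(u)$ and $\hat I_4(u,v)$; collecting the factors $p$ (one per $q_p$), $m$ (one per $m$-weighted line), the constant $\tfrac52$ per weighted $\psi_p$-line, the two branch-summands inside $T'_{p,m}$, and the constants $1/K_{\delta_i}$ and $2/\delta_i$ from \refeq{anyr}, gives the coefficients $5p^2m$, $\tfrac52p^2m$, $\tfrac{25}4p^2m/K_{\delta_2}$, etc.\ in \refeq{T'-bd}--\refeq{W''-bd}.

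The main difficulty is bookkeeping rather than conceptual: (i) making the cross-correlation/Fourier-inversion representation of each diagram precise — in particular routing the $m$-weights onto the correct $\hat\varphi_p$-arguments so that the result matches \refeq{I1-def}--\refeq{I4-def}; and (ii) the Leibniz estimate for $-\tfrac12\Delta_{\vec u}(\hat D\hat\varphi_p)$, whose ``cross term'' $\big(\hat D(l+\vec u)-\hat D(l)\big)\big(\hat\varphi_p(l+\vec u,z)-\hat\varphi_p(l,z)\big)+(\vec u\to-\vec u)$ has to be absorbed into $|\Delta_{\vec u}\hat D(l)|\,|\hat\varphi_p(l,z)|+|\Delta_{\vec u}\hat\varphi_p(l,z)|$ and is what fixes the precise constants. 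The hypotheses enter only mildly: $\delta_i\in(0,2)$ ensures $K_{\delta_i}\in(0,\infty)$, while $p<\pc$ and $m<m_p$ ensure that all series involved (in particular $\hat\varphi_p(l,e^{i\theta})$ and $\hat\varphi_p(l,me^{i\theta})$) converge absolutely, so that the Fourier–Laplace inversions and the interchange of $\int_0^1{\rm d}u$ (resp.\ ${\rm d}v$) with the sums are legitimate by Tonelli.
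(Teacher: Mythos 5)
Your overall architecture coincides with the paper's: apply \refeq{anyr} to each spatial weight (once for $T'_{p,m}$ and $W'_{p,m}$, twice for $W''_{p,m}$), let the constant term of \refeq{anyr} reproduce the unweighted or partially weighted diagram ($T_{p,m}$ for $T'_{p,m}$, $\tfrac12T_{p,m}$ for $W'_{p,m}$ --- note you need the factor $\tfrac12$ from $\sum_{\yvec}\psi_p(\yvec)\psi_p^{\sss(m)}(\yvec-\xvec)\le\tfrac12T_{p,m}$ to get the coefficient $\tfrac1{\delta_2}$ in \refeq{W'-bd} --- and $W'_{p,m}$ for $W''_{p,m}$), and bound the remaining $(1-\cos)$-weighted diagrams by $\hat I_1,\dots,\hat I_4$ via the Fourier--Laplace representation; your accounting of the factors $p$, $m$, the two branches of $T'_{p,m}$ and the constants $K_{\delta_i}$ is consistent with \refeq{T'-bd}--\refeq{W''-bd}.

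The genuine gap is the step converting a weighted $\psi_p$-line into the Fourier bound $\tfrac52p|z|\,\hat Y_{\vec u}(l,z)$. You propose a discrete Leibniz rule for $-\tfrac12\Delta_{\vec u}\big(\hat D(l)\hat\varphi_p(l,z)\big)$ and claim the cross term $\big(\hat D(l\pm\vec u)-\hat D(l)\big)\big(\hat\varphi_p(l\pm\vec u,z)-\hat\varphi_p(l,z)\big)$ can be ``absorbed'' into $\hat Y_{\vec u}(l,z)$. That absorption is not available: a second difference does not control first differences pointwise (take $l$ with $\Delta_{\vec u}\hat D(l)=0$ but $\hat D(l+\vec u)-\hat D(l)\neq0$), and the best uniform bound on the first difference, $|\hat D(l\pm\vec u)-\hat D(l)|\le\sqrt{2(1-\hat D(\vec u))}$ by Cauchy--Schwarz, is not of the form $\hat Y_{\vec u}$ and carries only half the power of $1-\hat D(\vec u)$, which would destroy the subsequent integrability of $\int_0^1u^{-1-\delta_1}(\cdots)\,{\rm d}u$ for $\delta_1$ up to $\alpha\wedge2\wedge(d-2(\alpha\wedge2))$ needed in Lemma~\ref{lmm:Ii-bds}; in particular it would not yield the lemma as stated in terms of $\hat I_1,\dots,\hat I_4$. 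The paper sidesteps the Leibniz rule entirely: it distributes the cosine in $x$-space over the convolution $\psi_p=q_p*\varphi_p$ using $1-\cos(t_1+t_2)\le5\big((1-\cos t_1)+(1-\cos t_2)\big)$ (cf.\ \cite[(4.50)]{s06}), obtaining the nonnegative majorant $5p\sum_{\wvec}\big(1-\cos(vw_1)\big)\big(D(w)\,\varphi_p(\yvec-\wvec)+\varphi_p(\wvec)\,D(y-w)\big)$, and only then takes Fourier--Laplace transforms and absolute values; this is what legitimately produces $\tfrac52p|z|\,\hat Y_{\vec u}(l,z)$ per weighted line, hence the constants $5p^2m$, $\tfrac{5p^2m}2$ and $\tfrac{25p^2m}4$. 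Replacing your Leibniz step by this $x$-space split-cosine argument repairs the proof; as written, the key pointwise estimate is unjustified and in general false.
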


\begin{proof}
We only prove \refeq{W'-bd}, since the other two inequalities can be proved in the same way.

First we use \refeq{anyr} to bound $|y_1-x_1|^{\delta_2}$ in \refeq{W'-def}.  The first term in \refeq{W'-bd} is due to the first term in \refeq{anyr} and the trivial inequality
\begin{align*}
\sum_{\yvec}\psi_p(\yvec)\,\psi_p^{\sss(m)}(\yvec-\xvec)\le\frac12T_{p,m}.
\end{align*}
To complete the proof of \refeq{W'-bd}, it thus remains to show
\begin{align}\lbeq{W'2nd-bd1}
\sum_{\yvec}\psi_p(\yvec+\xvec)\,\big(1-\cos(vy_1)\big)\psi_p^{\sss(m)}(\yvec)
 \le\frac{5p^2m}2\hat I_2(v).
\end{align}
However, since
$1-\cos\sum_{j=1}^Jt_j\le(2J+1)\sum_{j=1}^J(1-\cos t_j)$ (cf., \cite[(4.50)]{s06}), we have
\begin{align*}
\big(1-\cos(vy_1)\big)\psi_p(\yvec)&\equiv\big(1-\cos(vy_1)\big)(q_p*\varphi_p)
 (\yvec)\\
&\le5p\sum_{\wvec}\big(1-\cos(vw_1)\big)\Big(D(w)\,\varphi_p(\yvec-\wvec)
 +\varphi_p(\wvec)\,D(y-w)\Big).
\end{align*}
Applying this to the left-hand side of \refeq{W'2nd-bd1}, then taking the Fourier-Laplace transform and using $|\hat\varphi_p(l,e^{i\theta})|=|\hat\varphi_p(l,e^{-i\theta})|$, we obtain \refeq{W'2nd-bd1}.  This completes the proof of \refeq{W'-bd}.
\end{proof}

\subsection{Bounds on the diagram functions}\label{ss:bounds}
In this subsection, we complete the proof of Proposition~\ref{prp:key} using the following lemma:

\begin{lemma}\label{lmm:Ii-bds}
Let $\alpha>0$ and $d>2(\alpha\wedge2)$, and choose $\delta$ as in \refeq{delta-def} and $\delta_1,\delta_2\in(0,2)$ as
\begin{align*}
\delta<\delta_1<\alpha\wedge2\wedge\big(d-2(\alpha\wedge2)\big),&&
\delta_2=\alpha\wedge2+\delta-\delta_1.
\end{align*}
Then,
\begin{align}\lbeq{unformbds}
T_{p,m}=O(\lambda),&&
 \left.\begin{array}{c}
 T'_{p,m}\\ W'_{p,m}\\ W''_{p,m}
 \end{array}\!\right\}=O(1),
\end{align}
uniformly in $p\in(0,\pc)$ and $m\in[0,m_p)$.
\end{lemma}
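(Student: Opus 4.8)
The plan is to prove the four bounds in \refeq{unformbds} by reducing each diagram function to a Fourier-space integral and then controlling that integral using the two-point-function estimates \refeq{varphi-bd}--\refeq{Deltavarphi-bd}. First I would treat $T_{p,m}$: taking the Fourier-Laplace transform of \refeq{T-def} turns it into an integral of a product of three propagators of the form $|\hat\varphi_p(l,e^{i\theta})|^2|\hat\varphi_p(l,me^{i\theta})|$ (plus a similar term with one factor replaced using $q_p$), and the standard ``triangle'' argument from Part~I --- together with the bound $|\hat\varphi_p(l,e^{i\theta})|\le O(1)/(|\theta|+1-\hat D(l))$ from \refeq{varphi-bd}, the infrared assumption \refeq{valpha}, and $d>2(\alpha\wedge2)$ --- shows the resulting integral is finite with the extra small factor $O(\lambda)$ coming from the explicit $p^2$ (or from bounding one $\varphi_p$ crudely by $\delta_{\cdot,\ovec}+\psi_p$ and extracting $\|D\|_\infty=O(\lambda)$). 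This is essentially a restatement of the triangle condition verified in \cite{cs08}, so I would quote it.

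Next, using Lemma~\ref{lmm:diagfurther}, each of $T'_{p,m}$, $W'_{p,m}$, $W''_{p,m}$ is bounded by $T_{p,m}$-type terms (already $O(1)$, in fact $O(\lambda)$) plus integrals $\int_0^1 u^{-1-\delta_1}\hat I_j(u)\,{\rm d}u$ and the double integral $\int_0^1\int_0^1 u^{-1-\delta_1}v^{-1-\delta_2}\hat I_4(u,v)$. So the heart of the matter is to show each of $\hat I_1,\dots,\hat I_4$ decays in its argument(s) fast enough to make these (possibly weighted) integrals converge near $u,v=0$, and stays bounded away from $u,v=0$. The key input is that $\hat Y_k(l,z)=|\Delta_k\hat D(l)||\hat\varphi_p(l,z)|+|\Delta_k\hat\varphi_p(l,z)|$ carries an explicit factor $1-\hat D(k)\asymp v_\alpha|k|^{\alpha\wedge2}$ (up to the log at $\alpha=2$): for the first term this is because $|\Delta_k\hat D(l)|\le 2(1-\hat D(k))$ by the Fourier-space displacement inequality, and for the second it is built into \refeq{Deltavarphi-bd}. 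Thus $\hat Y_{\vec u}(l,e^{i\theta})\le O(1)(1-\hat D(\vec u))\times(\text{sum of products of two propagators at shifted momenta } l\pm\vec u)$, and likewise $\hat Y_{\vec v}(l,me^{i\theta})$ carries a factor $1-\hat D(\vec v)$. Feeding these into \refeq{I1-def}--\refeq{I4-def} and doing the $l,\theta$ integral by the same triangle/square bookkeeping as for $T_{p,m}$ (the number of propagator factors is always four, matching a ``square'' diagram, which converges for $d>2(\alpha\wedge2)$), I obtain
\begin{align*}
\hat I_1(u)\le O(1)\big(1-\hat D(\vec u)\big),\qquad
\hat I_2(v)\le O(1)\big(1-\hat D(\vec v)\big),
\end{align*}
\begin{align*}
\hat I_3(u)\le O(1)\big(1-\hat D(\vec u)\big),\qquad
\hat I_4(u,v)\le O(1)\big(1-\hat D(\vec u)\big)\big(1-\hat D(\vec v)\big),
\end{align*}
all uniform in $p\in(0,\pc)$ and $m\in[0,m_p)$. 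Since $1-\hat D(\vec u)\le O(L^{\alpha\wedge2})|u|^{\alpha\wedge2}$ for small $u$ and is bounded for all $u\in[-\pi,\pi]$, the weighted integrals behave like $\int_0^1 u^{-1-\delta_1}|u|^{\alpha\wedge2}\,{\rm d}u$, which converges precisely because $\delta_1<\alpha\wedge2$; similarly $\delta_2<\alpha\wedge2$ handles the $v$-integral in $\hat I_4$ and the $W'_{p,m}$ bound. Hence $T'_{p,m},W'_{p,m},W''_{p,m}=O(1)$. Finally, combining these with Lemma~\ref{lmm:|x1|piN-bd} and the geometric decay $T_{p,m}=O(\lambda)<1$ (for $L\gg1$) makes $\sum_N(N+1)^{\delta_1+\delta_2}T_{p,m}^{N-2}(\cdots)$ summable, which together with Lemma~\ref{lmm:elementary} yields Proposition~\ref{prp:key}.

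The main obstacle is the uniformity in $m$ up to $m_p$ combined with the infrared behavior near $\theta=0$, $l=0$: the bound \refeq{varphi-bd} degenerates there as $pm_p(1-m/m_p)\to0$, so one must be careful that the $|\theta|$ in the denominators supplies enough integrability in the $\theta$-direction (this is exactly the role of the ``$+|\theta|$'' terms inherited from Part~I), and that the momentum-shift structure in \refeq{Deltavarphi-bd} --- where the two shifted propagators sit at $l+jk$ and $l+j'k$ with $(j,j')\in\{(0,\pm1),(1,-1)\}$ --- is organized so that, after the change of variables $l\mapsto l-\vec u$ or $l\mapsto l-\vec v$, one genuinely recovers a convergent square diagram rather than spurious collinear divergences. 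I would handle this by splitting the $l$-integration into the region near the origin, where \refeq{valpha} and a Hölder/bootstrap estimate on $\int |\hat\varphi_p|^2$ type quantities apply, and the region bounded away from the origin, where all denominators are $\Theta(1)$ and everything is trivially finite; the small-$\lambda$ factor is then extracted from the explicit powers of $p$ and from the $\|D\|_\infty=O(\lambda)$ hidden in one crude $\varphi_p\le\delta_{\cdot,\ovec}+\psi_p$ replacement, exactly as in the triangle estimate of \cite{cs08}.
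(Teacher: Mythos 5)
Your reduction of $T'_{p,m},W'_{p,m},W''_{p,m}$ to the weighted integrals of $\hat I_1,\dots,\hat I_4$ via Lemma~\ref{lmm:diagfurther}, and your treatment of $T_{p,m}=O(\lambda)$ by quoting the triangle estimate of \cite{cs08}, match the paper. The genuine gap is in your claimed uniform bounds $\hat I_1(u)\le O(1-\hat D(\vec u))$ and $\hat I_4(u,v)\le O(1-\hat D(\vec u))(1-\hat D(\vec v))$. After extracting the factor $1-\hat D(\vec u)$ from \refeq{Deltavarphi-bd}, the integrand of \refeq{I1-def} still contains \emph{four} propagator factors: two at the shifted momenta $l\pm\vec u$ and two at $l$. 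The $\theta$-integration removes only one power, so the remaining $l$-integral is a triangle with one displaced leg, essentially $\hat J(u)$ of \refeq{J-def}. For $d>3(\alpha\wedge2)$ this is uniformly bounded and your argument would go through, but the lemma is claimed for all $d>2(\alpha\wedge2)$; in the regime $2(\alpha\wedge2)<d\le3(\alpha\wedge2)$ one has $\hat J(u)\to\int{\rm d}^dl\,(1-\hat D(l))^{-3}=\infty$ as $u\to0$, so no bound of the form $\hat I_1(u)\le O(1-\hat D(\vec u))$ is available from \refeq{varphi-bd}--\refeq{Deltavarphi-bd}. Your assertion that ``four propagator factors match a square diagram, which converges for $d>2(\alpha\wedge2)$'' is the incorrect step: only the bubble (two spatial propagators after the $\theta$-integration) converges under $d>2(\alpha\wedge2)$; the three-spatial-propagator object needs either $d>3(\alpha\wedge2)$ or the quantitative gain from the shift $\vec u$, which must be tracked.

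This is exactly why the hypothesis $\delta_1<d-2(\alpha\wedge2)$ appears in the lemma, a condition your argument never uses (you only invoke $\delta_1,\delta_2<\alpha\wedge2$). The paper proves $\hat I_1(u)\le O(1-\hat D(\vec u))\,\hat J(u)$ together with $\hat J(u)\le O\big(u^{(d-3(\alpha\wedge2))\wedge0}\big)$ (see \refeq{Jbd}, obtained by splitting the $l$-integral into $|l|\ge\frac32u$ and the two complementary regions), so that $(1-\hat D(\vec u))\hat J(u)/u^{1+\delta_1}$ is integrable precisely when $\delta_1<\alpha\wedge2\wedge(d-2(\alpha\wedge2))$. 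For $\hat I_4$ there is an additional difficulty you do not address: the bound does not factorize in $u$ and $v$, because the shifted propagators at $l+j_1'\vec u$ and $l+j\vec v$ interact; the paper handles this by Schwarz, producing terms $\hat J(u)^{1/2}\hat J(|v\pm u|)^{1/2}$, and then needs the extra estimate \refeq{I4bd3} showing that $\int_0^1 v^{-1-\delta_2}(1-\hat D(\vec v))\hat J(|v-u|)^{1/2}\,{\rm d}v$ obeys the same bound as $\hat J(u)^{1/2}$. Your bounds for $\hat I_2$ and $\hat I_3$ (three propagator factors, reducing to a convergent bubble) are fine and agree with the paper, but without the $u$-dependent control of $\hat J$ and the $u$--$v$ coupling in $\hat I_4$, the proof of the lemma in the full range $d>2(\alpha\wedge2)$ is incomplete.
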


\begin{proof}[Proof of Proposition~\ref{prp:key}]
First, by Lemmas~\ref{lmm:elementary} and \ref{lmm:|x1|piN-bd} with $r=\delta_1+\delta_2\equiv\alpha\wedge2+\delta<4$, we obtain that, for any $p\in(0,\pc]$,
\begin{align}\lbeq{|x|pi-proof1}
&\sum_{(x,n)}|x|^{\alpha\wedge2+\delta}|\pi_p(x,n)|\,m_p^n\nn\\
&\le d^3\sum_{N=0}^\infty(N+1)^{\delta_1+\delta_2}(T_{p,m_p})^{N-2}\bigg(\Big(
 N(1+T_{p,m_p})+T_{p,m_p}\Big)T_{p,m_p}W''_{p,m_p}\nn\\
&\hskip9pc+N\Big((N-1)(1+T_{p,m_p})+3T_{p,m_p}\Big)T'_{p,m_p}W'_{p,m_p}\bigg).
\end{align}
Since the diagram functions \refeq{T-def}--\refeq{W''-def} are increasing in $m\ge0$ for every $p\ge0$ and in $p\ge0$ for every $m\ge0$, the uniform bounds in \refeq{unformbds} imply that these diagram functions at $m=m_p$ obey the same bounds uniformly in $p\in(0,\pc]$.  Therefore, the right-hand side of \refeq{|x|pi-proof1} is convergent, if $\lambda$ is sufficiently small.  This completes the proof of Proposition~\ref{prp:key}.
\end{proof}

\begin{proof}[Proof of Lemma~\ref{lmm:Ii-bds}]
It is not hard to extend \cite[Lemma~4.1]{cs08} to show that
$T_{p,m}=O(\lambda)$ uniformly in $p\in(0,\pc)$ and $m\in[0,m_p)$.  Recall
Lemma~\ref{lmm:diagfurther}.  To complete the proof of Lemma~\ref{lmm:Ii-bds},
it thus suffices to show that the integrals in \refeq{T'-bd}--\refeq{W''-bd} of
$\hat I_1,\dots,\hat I_4$ are bounded uniformly in $p\in(0,\pc)$ and
$m\in[0,m_p)$.

The integrals of $\hat I_2$ and $\hat I_3$ are easy and can be estimated
similarly.  For example, by \refeq{varphi-bd}--\refeq{Deltavarphi-bd} and
$|\Delta_{\vec v}\hat D(l)|\le2\big(1-\hat D(\vec v)\big)$ (cf.,
\refeq{DeltaFL}),
\begin{align*}
\hat I_2(v)&=\int\frac{\textrm{d}^dl}{(2\pi)^d}\int\frac{\textrm{d}\theta}{2
 \pi}\;|\hat\varphi_p(l,e^{i\theta})|\Big(|\Delta_{\vec v}\hat D(l)|\,|\hat
 \varphi_p(l,me^{i\theta})|+|\Delta_{\vec v}\hat\varphi_p(l,me^{i\theta})|
 \Big)\\
&\le O(1-\hat D(\vec v))\int\frac{\textrm{d}^dl}{(2\pi)^d}\int\frac{\textrm{d}
 \theta}{2\pi}\;\frac1{|\theta|+1-\hat D(l)}\bigg(\frac1{|\theta|+1-\hat D(l)}
 \\
&\hskip5pc+\sum_{(j,j')=(0,\pm1),(1,-1)}\frac1{(|\theta|+1-\hat D(l+j\vec v))
 (|\theta|+1-\hat D(l+j'\vec v))}\bigg)
\end{align*}
holds uniformly in $p\in(0,\pc)$ and $m\in[0,m_p)$.  Using the H\"older inequality twice and the translation-invariance of $D$, we have
\begin{align*}
&\int\frac{\textrm{d}\theta}{2\pi}\;\frac1{|\theta|+1-\hat D(l)}\frac1
 {(|\theta|+1-\hat D(l+j\vec v))(|\theta|+1-\hat D(l+j'\vec v))}\\
&\le\Bigg(\int\frac{\textrm{d}\theta}{2\pi}\;\frac1{|\theta|+1-\hat D(l)}
 \bigg(\frac1{|\theta|+1-\hat D(l+j\vec v)}\bigg)^2\Bigg)^{1/2}\\
&\quad\times\Bigg(\int\frac{\textrm{d}\theta}{2\pi}\;\frac1{|\theta|+1-\hat
 D(l)}\bigg(\frac1{|\theta|+1-\hat D(l+j'\vec v)}\bigg)^2\Bigg)^{1/2}\\
&\le\Bigg(\int\frac{\textrm{d}\theta}{2\pi}\;\bigg(\frac1{|\theta|+1-\hat
 D(l)}\bigg)^3\Bigg)^{1/6}\Bigg(\int\frac{\textrm{d}\theta}{2\pi}\;\bigg(
 \frac1{|\theta|+1-\hat D(l+j\vec v)}\bigg)^3\Bigg)^{1/3}\\
&\quad\times\Bigg(\int\frac{\textrm{d}\theta}{2\pi}\;\bigg(\frac1{|\theta|
 +1-\hat D(l)}\bigg)^3\Bigg)^{1/6}\Bigg(\int\frac{\textrm{d}\theta}{2\pi}\;
 \bigg(\frac1{|\theta|+1-\hat D(l+j'\vec v)}\bigg)^3\Bigg)^{1/3}\\
&=\int\frac{\textrm{d}\theta}{2\pi}\;\bigg(\frac1{|\theta|+1-\hat D(l)}
 \bigg)^3.
\end{align*}
Since, by \refeq{valpha},
\begin{align*}
\int\frac{\textrm{d}^dl}{(2\pi)^d}\int\frac{\textrm{d}\theta}{2\pi}\;\bigg(
 \frac1{|\theta|+1-\hat D(l)}\bigg)^3\le\int\frac{\textrm{d}^dl}{(2\pi)^d}\;
 \frac{O(1)}{(1-\hat D(l))^2}<\infty
\end{align*}
holds for $d>2(\alpha\wedge2)$, we conclude that, for
$\delta_2=\alpha\wedge2-(\delta_1-\delta)<\alpha\wedge2$,
\begin{align*}
\int_0^1\frac{\textrm{d}v}{v^{1+\delta_2}}\;\hat I_2(v)\le\int_0^1
 \frac{\textrm{d}v}{v^{1+\delta_2}}\;O\big(1-\hat D(\vec v)\big)<\infty,
\end{align*}
as required.

Next, we consider the integral of $\hat I_1$.  In fact, we only need consider
the contribution from $|\Delta_{\vec u}\hat\varphi_p(l,e^{i\theta})|$ in $\hat
Y_{\vec u}(l,e^{i\theta})$ of \refeq{I1-def}, because the contribution from the
other term in $\hat Y_{\vec u}(l,e^{i\theta})$ can be estimated similarly to
the integral of $\hat I_2$, as explained above.  Using
\refeq{varphi-bd}--\refeq{Deltavarphi-bd} and ignoring some factors of
$|\theta|$, we obtain
\begin{align}\lbeq{I1-bd1}
&\int\frac{\textrm{d}^dl}{(2\pi)^d}\int\frac{\textrm{d}\theta}{2\pi}\;|\Delta_{
 \vec u}\hat\varphi_p(l,e^{i\theta})|\,|\hat\varphi_p(l,e^{i\theta})|\,|\hat
 \varphi_p(l,me^{i\theta})|\nn\\
&\le\sum_{(j,j')}\int\frac{\textrm{d}^dl}{(2\pi)^d}\;\frac{O(1-\hat D(\vec u))}
 {(1-\hat D(l+j\vec u))(1-\hat D(l+j'\vec u))}\int\frac{\textrm{d}\theta}{2\pi}
 \bigg(\frac1{|\theta|+1-\hat D(l)}\bigg)^2\nn\\
&\le O\big(1-\hat D(\vec u)\big)\sum_{(j,j')}\int\frac{\textrm{d}^dl}{(2\pi)^d}
 \;\frac1{(1-\hat D(l+j\vec u))(1-\hat D(l+j'\vec u))(1-\hat D(l))},
\end{align}
where $\sum_{(j,j')}$ is the sum over $(j,j')=(0,\pm1),(1,-1)$.  By the
translation-invariance and $\Zd$-symmetry of $D$, the integral for
$(j,j')=(0,\pm1)$ equals
\begin{align}\lbeq{J-def}
\hat J(u)=\int_{[-\pi,\pi]^d}\frac{\textrm{d}^dl}{(2\pi)^d}\;\frac1{(1-\hat D
 (l))^2(1-\hat D(l-\vec u))}.
\end{align}
Moreover, by the Schwarz inequality, the integral for $(j,j')=(1,-1)$ is
bounded by
\begin{align*}
&\bigg(\int\frac{\textrm{d}^dl}{(2\pi)^d}\;\frac1{(1-\hat D(l+\vec u))^2(1-\hat
 D(l))}\bigg)^{1/2}\\
&\times\bigg(\int\frac{\textrm{d}^dl}{(2\pi)^d}\;\frac1{(1-\hat D(l-\vec u))^2
 (1-\hat D(l))}\bigg)^{1/2}=\hat J(u).
\end{align*}
Therefore,
\begin{align}\lbeq{I1-bd2}
\refeq{I1-bd1}\le O\big(1-\hat D(\vec u)\big)\hat J(u).
\end{align}

Now we show
\begin{align}\lbeq{Jbd}
\hat J(u)\le O\big(u^{(d-3(\alpha\wedge2))\wedge0}\big),
\end{align}
which is sufficient for the integral of $\hat I_1$ to be convergent for
$\delta_1<\alpha\wedge2\wedge(d-2(\alpha\wedge2))$.  Let
\begin{align}
R_1&=\big\{l\in[-\pi,\pi]^d:|l|\ge\tfrac32u\big\},\\
R_2&=\big\{l\in[-\pi,\pi]^d:|l|\le\tfrac32u,~|l|\le|l-\vec u|\big\},\\
R_3&=\big\{l\in[-\pi,\pi]^d:|l|\le\tfrac32u,~|l-\vec u|\le|l|\big\}.
\end{align}
Notice that $1-\hat D(l-\vec u)\ge O(|l-\vec u|^{\alpha\wedge2})$ for any
$l\in[-\pi,\pi]^d$ and $u\in[0,1]$ (cf., \cite[Proposition~1.1]{cs08}).  Since
$|l-\vec u|\ge|l|-u\ge\frac13|l|$ for $l\in R_1$, we have
\begin{align*}
\int_{R_1}\frac{\textrm{d}^dl}{(2\pi)^d}\;\frac1{(1-\hat D(l))^2(1-\hat D(l
 -\vec u))}&\le O(1)\int_{R_1}\frac{\textrm{d}^dl}{|l|^{3(\alpha\wedge2)}}\le
 O\big(u^{(d-3(\alpha\wedge2))\wedge0}\big).
\end{align*}
Moreover, since $|l-\vec u|\ge\frac{u}2$ for $l\in R_2$ and $|l|\ge\frac{u}2$
for $l\in R_3$, we have
\begin{align*}
\int_{R_2}\frac{\textrm{d}^dl}{(2\pi)^d}\;\frac1{(1-\hat D(l))^2(1-\hat D(l
 -\vec u))}&\le O(u^{-\alpha\wedge2})\int_{|l|\le\frac32u}\frac{\textrm{d}^dl}
 {|l|^{2(\alpha\wedge2)}}\le O\big(u^{d-3(\alpha\wedge2)}\big),
\end{align*}
and
\begin{align*}
\int_{R_3}\frac{\textrm{d}^dl}{(2\pi)^d}\;\frac1{(1-\hat D(l))^2(1-\hat D(l
 -\vec u))}&\le O(u^{-2(\alpha\wedge2)})\int_{|l|\le\frac32u}\frac{\textrm{d}^d
 l}{|l|^{\alpha\wedge2}}\le O\big(u^{d-3(\alpha\wedge2)}\big).
\end{align*}
This completes the proof of \refeq{Jbd}, as required.

Finally, we discuss the integral of $\hat I_4$.  We only need consider the
contribution from $|\Delta_{\vec u}\hat\varphi_p(l,e^{i\theta})||\Delta_{\vec
v}\hat\varphi_p(l,me^{i\theta})|$ in $\hat Y_{\vec u}(l,e^{i\theta})\hat
Y_{\vec v}(l,me^{i\theta})$ of \refeq{I4-def}, since the contributions from the
other combinations are bounded similarly to the integrals of $\hat I_1,\hat
I_2,\hat I_3$ as long as $d>2(\alpha\wedge2)$,
$\delta_1<\alpha\wedge2\wedge(d-2(\alpha\wedge2))$ and
$\delta_2<\alpha\wedge2$.  Using \refeq{Deltavarphi-bd} and ignoring some
factors of $|\theta|$, we have
\begin{align}\lbeq{I4bd1}
&\int\frac{\textrm{d}^dl}{(2\pi)^d}\int\frac{\textrm{d}\theta}{2\pi}\;
 |\Delta_{\vec u}\hat\varphi_p(l,e^{i\theta})|\,|\Delta_{\vec v}\hat\varphi_p
 (l,me^{i\theta})|\nn\\
&\le\sum_{(j_1,j'_1),(j_2,j'_2)}\int\frac{\textrm{d}^dl}{(2\pi)^d}\;\frac{O(1
 -\hat D(\vec u))}{(1-\hat D(l+j_1\vec u))(1-\hat D(l+j'_1\vec u))}\nn\\
&\hskip4pc\times\int\frac{\textrm{d}\theta}{2\pi}\;\frac{1-\hat D(\vec v)}
 {(|\theta|+1-\hat D(l+j_2\vec v))(|\theta|+1-\hat D(l+j'_2\vec v))}.
\end{align}
Notice that
\begin{align*}
&\sum_{(j_2,j'_2)}\int\frac{\textrm{d}\theta}{2\pi}\;\frac1
 {(|\theta|+1-\hat D(l+j_2\vec v))(|\theta|+1-\hat D(l+j'_2\vec v))}\\
&\le\sum_{\substack{(j_2,j'_2)}}\frac1{1-\hat D(l+j_2\vec v)\vee\hat D
 (l+j'_2\vec v)}\le\sum_{j=0,\pm1}\frac2{1-\hat D(l+j\vec v)}.
\end{align*}
The contribution from $j=0$ is bounded, similarly to \refeq{I1-bd2}, by
$O(1-\hat D(\vec u))\hat J(u)(1-\hat D(\vec v))$, where $(1-\hat D(\vec u))\hat
J(u)/u^{1+\delta_1}$ is integrable if
$\delta_1<\alpha\wedge2\wedge(d-2(\alpha\wedge2))$ and $(1-\hat D(\vec
v))/v^{1+\delta_2}$ is integrable if $\delta_2<\alpha\wedge2$ (see around
\refeq{Jbd}).  On the other hand, the contribution from $j=\pm1$ is bounded,
due to the Schwarz inequality and the $\Zd$-symmetry and translation-invariance
of $D$, by
\begin{align}\lbeq{I4bd2}
&\sum_{(j_1,j'_1)}\int\frac{\textrm{d}^dl}{(2\pi)^d}\;\frac{O(1-\hat D(\vec
 u))}{(1-\hat D(l+j_1\vec u))(1-\hat D(l+j'_1\vec u))}\frac{1-\hat D(\vec v)}
 {1-\hat D(l+j\vec v)}\nn\\
&\le\sum_{(j_1,j'_1)}\bigg(\int\frac{\textrm{d}^dl}{(2\pi)^d}\;\frac{O(1-\hat
 D(\vec u))^2}{(1-\hat D(l+j_1\vec u))^2(1-\hat D(l+j'_1\vec u))}\bigg)^{1/2}
 \nn\\
&\hskip5pc\times\bigg(\int\frac{\textrm{d}^dl}{(2\pi)^d}\;\frac{(1-\hat D(\vec
 v))^2}{(1-\hat D(l+j'_1\vec u))(1-\hat D(l+j\vec v))^2}\bigg)^{1/2}\nn\\
&\le O\big(1-\hat D(\vec u)\big)\big(1-\hat D(\vec v)\big)\sum_{(j_1,j'_1)}\hat
 J\big((1-j_1j'_1)u\big)^{1/2}\hat J(|v-jj'_1u|)^{1/2}\nn\\
&=O\big(1-\hat D(\vec u)\big)\big(1-\hat D(\vec v)\big)\Big(\big(\hat J(u)^{1
 /2}+\hat J(2u)^{1/2}\big)\,\hat J(|v+ju|)^{1/2}\nn\\
&\hskip13pc+\hat J(u)^{1/2}\hat J(|v-ju|)^{1/2}\Big).
\end{align}
It is not hard to show that $\hat J(2u)$ and $\hat J(v+u)$ obey the same bound
as $\hat J(u)$ for $u,v\in[0,1]$.  Therefore, the contribution to \refeq{I4bd2} from $\hat J(v+u)$ is bounded by $O(1-\hat D(\vec u))(1-\hat D(\vec v))\hat J(u)$, which divided by $u^{1+\delta_1}v^{1+\delta_2}$ is integrable if
$d>2(\alpha\wedge2)$, $\delta_1<\alpha\wedge2\wedge(d-2(\alpha\wedge2))$ and
$\delta_2<\alpha\wedge2$, as explained above.  Moreover, since
\begin{align*}
\int_{\frac{u}2}^{\frac{3u}2}\frac{\textrm{d}v}{v^{1+\delta_2}}\,\big(1-\hat D
 (\vec v)\big)\hat J(|v-u|)^{1/2}
&\le\underbrace{\int_0^{\frac{u}2}\textrm{d}r\;\hat J(r)^{1/2}}_{O\big(u^{\frac
 {d-3(\alpha\wedge2)}2\wedge0+1}\big)}\times
 \begin{cases}
 O\big(u^{(\alpha\wedge2-\delta_2-1)\wedge0}\big)&(\alpha\ne2)\\
 O\big(u^{(1-\delta_2)\wedge0}\log\tfrac1u\big)\quad&(\alpha=2)
 \end{cases}\\
&\le O\Big(u^{\frac{d-3(\alpha\wedge2)}2\wedge0}\Big),
\end{align*}
and
\begin{align*}
\int_{[0,1]\setminus[\frac{u}2,\frac{3u}2]}\frac{\textrm{d}v}{v^{1+\delta_2}}\,
 \big(1-\hat D(\vec v)\big)\hat J(|v-u|)^{1/2}
&\le O\Big(u^{\frac{d-3(\alpha\wedge2)}2\wedge0}\Big)\int_0^1\frac{\textrm{d}v}
 {v^{1+\delta_2}}\,\big(1-\hat D(\vec v)\big)\\
&\le O\Big(u^{\frac{d-3(\alpha\wedge2)}2\wedge0}\Big),
\end{align*}
we have
\begin{align}\lbeq{I4bd3}
\int_0^1\frac{\textrm{d}v}{v^{1+\delta_2}}\,\big(1-\hat D(\vec v)\big)\hat J
 (|v-u|)^{1/2}\le O\Big(u^{\frac{d-3(\alpha\wedge2)}2\wedge0}\Big),
\end{align}
i.e., the left-hand side of \refeq{I4bd3} obeys the same bound as $\hat
J(u)^{1/2}$.  Therefore, the contribution to \refeq{I4bd2} from $\hat J(|v-u|)$, divided by $u^{1+\delta_1}v^{1+\delta_2}$, is also integrable if $d>2(\alpha\wedge2)$, $\delta_1<\alpha\wedge2\wedge(d-2(\alpha\wedge2))$ and
$\delta_2<\alpha\wedge2$, as required.  This completes the proof of
Lemma~\ref{lmm:Ii-bds}.
\end{proof}

\section*{Acknowledgements}
The work of LCC was carried out at the University of British Columbia, Canada,
and was supported in part by TJ \& MY Foundation and NSC grant.  The work of AS
was supported by the 2007 Special Coordination Funds for Promoting Science and
Technology of the Ministry of Education, Culture, Sports, Science and
Technology, of Japan.  AS would like to thank the secretaries of ``L-Station"
at CRIS and those of the Department of Mathematics at Hokkaido University for
having set up a comfortable working environment as quickly as they could during
the transition period.  LCC and AS are grateful to the anonymous referees for
various useful suggestions.

\end{document}